\theoremstyle{plain} 
\newtheorem{theorem}{\indent\sc Theorem}[section]
\newtheorem{lemma}[theorem]{\indent\sc Lemma}
\newtheorem{corollary}[theorem]{\indent\sc Corollary}
\newtheorem{proposition}[theorem]{\indent\sc Proposition}
\theoremstyle{definition} 
\newtheorem{remark}[theorem]{\indent\sc Remark}
\title{Harmonic aspects in an $\eta$-Ricci soliton}
\author{Adara M. Blaga}
\date{}
\begin{document}

\maketitle

\markboth{{\small\it {\hspace{4cm} Harmonic aspects in an $\eta$-Ricci soliton}}}{\small\it{Harmonic aspects in an $\eta$-Ricci soliton
\hspace{4cm}}}

\footnote{2010 Mathematics Subject Classification: 35C08, 53C25}
\footnote{Keywords and phrases: gradient Ricci solitons, Schr\"{o}dinger-Ricci equation, harmonic form}


\begin{abstract}
We characterize $\eta$-Ricci solitons $(g,\xi,\lambda,\mu)$ in some special cases when the $1$-form $\eta$, which is the $g$-dual of $\xi$, is a harmonic or a Schr\"{o}dinger-Ricci harmonic form. We also provide necessary and sufficient conditions for $\eta$ to be a solution of the Schr\"{o}dinger-Ricci equation and point out the relation between the three notions in our context. In particular, we apply these results to a perfect fluid spacetime and using Bochner-Weitzenb\"{o}ck techniques, we formulate some more conclusions for gradient solitons and deduce topological properties of the manifold and its universal covering.
\end{abstract}

\maketitle


\section{Introduction}

Self-similar solutions to the Ricci flow, the \textit{Ricci solitons} \cite{ham} have been studied in different geometrical contexts on complex, contact and paracontact manifolds. The more general
notion of \textit{$\eta$-Ricci soliton} was introduced by J. T. Cho and M. Kimura \cite{ch} on real hypersurfaces in a K\"{a}hler manifold and treated in complex space forms \cite{cacr}, Euclidean hypersurfaces \cite{b12}, paracontact geometries \cite{bla}, \cite{bl}, \cite{blcr}, \cite{b8}, \cite{b9}, \cite{b10}. Different geometrical aspects of Ricci and $\eta$-Ricci solitons have been studied by author in \cite{b11}, \cite{b1}, \cite{b6}. Further generalizations of this notion and properties of other geometrical solitons can be found in \cite{b3}, \cite{b7} and \cite{b4}, \cite{b5}.

A particular case of solitons arise when they evolve by diffeomorphism generated by a gradient vector field, namely when
the potential vector field is the gradient of a smooth function. The gradient vector fields play a central r\^{o}le in Morse-Smale theory \cite{ms} and some aspects of gradient $\eta$-Ricci solitons were discusses by author in \cite{b}, \cite{blaga}, \cite{ada}, \cite{blag}, \cite{blagam}, \cite{b2}.

In Section 2, after we point out the basic properties of an $\eta$-Ricci soliton $(g,\xi,\lambda,\mu)$, we provide necessary and sufficient conditions for the $g$-dual $1$-form of the potential vector field $\xi$ to be a solution of the Schr\"{o}dinger-Ricci equation, a harmonic or a Schr\"{o}dinger-Ricci harmonic form and characterize the $1$-forms orthogonal to $\eta$. We end these considerations by discussing the case of a perfect fluid spacetime. In Section 3 we formulate the results for the special case of gradient solitons and deduce topological properties of the manifold and its universal covering \cite{l}.

\section{Geometrical aspects of $\eta$}

Let $(M,g)$ be an $n$-dimensional Riemannian manifold, $n> 2$, and denote by $\flat:TM\rightarrow T^*M$, $\flat(X):=i_Xg$, $\sharp:T^*M\rightarrow TM$, $\sharp:=\flat^{-1}$ the musical isomorphism. Consider the set $\mathcal{T}^0_{2,s}(M)$ of symmetric $(0,2)$-tensor fields on $M$ and for $Z\in \mathcal{T}^0_{2,s}(M)$, denote by $Z^{\sharp}:TM\rightarrow TM$ and by $Z_{\sharp}:T^*M\rightarrow T^*M$ the maps defined as follows:
$$g(Z^{\sharp}(X),Y):=Z(X,Y), \ \ Z_{\sharp}(\alpha)(X):=Z(\sharp(\alpha),X).$$
We also denote by $Z^{\sharp}$ the map $Z^{\sharp}:T^*M\times T^*M\rightarrow C^{\infty}(M)$:
$$Z^{\sharp}(\alpha,\beta):=Z(\sharp(\alpha),\sharp(\beta))$$
and can identify $Z_{\sharp}$ with the map also denoted by $Z_{\sharp}:T^*M\times TM\rightarrow C^{\infty}(M)$:
$$Z_{\sharp}(\alpha,X):=Z_{\sharp}(\alpha)(X).$$

Given a vector field $X$, its $g$-dual $1$-form $X^{\flat}=:\flat(X)$ is said to be a \textit{solution of the Schr\"{o}dinger-Ricci equation} if it satisfies:
\begin{equation}
div(L_Xg)=0,
\end{equation}
where $L_Xg$ denotes the Lie derivative along the vector field $X$.

It is known that \cite{cho}:
\begin{equation}\label{e61}
div(L_Xg)=(\Delta+S_{\sharp})(X^{\flat})+d(div(X)),
\end{equation}
where $\Delta$ denotes the Laplace-Hodge operator on forms w.r.t. the metric $g$ and $S$ the Ricci curvature tensor field. Denoting by $Q$ the Ricci operator defined by $g(QX,Y):=S(X,Y)$, for any vector fields $X$ and $Y$, by a direct computation we deduce that $S_{\sharp}(\gamma)=i_{Q\gamma^{\sharp}}g$, for any $1$-form $\gamma$.

\subsubsection*{\textbf{$\eta$-Ricci solitons.}}

We are interested to find the necessary and sufficient conditions for the $g$-dual $1$-form $\eta$ of the potential vector field $\xi$ in an $\eta$-Ricci soliton to be a solution of the Schr\"{o}dinger-Ricci equation, a harmonic or Schr\"{o}dinger-Ricci harmonic form.

\bigskip

Consider the equation:
\begin{equation}\label{11}
L_{\xi}g+2S+2\lambda g+2\mu\eta\otimes \eta=0,
\end{equation}
where $g$ is a Riemannian metric, $S$ its Ricci curvature tensor field, $\xi$ a vector field, $\eta$ a $1$-form and $\lambda$ and $\mu$ are real constants. The data $(g,\xi,\lambda,\mu)$ which satisfy
the equation (\ref{11}) is said to be an \textit{$\eta$-Ricci soliton} on $M$ \cite{ch}; in particular, if $\mu=0$, $(g,\xi,\lambda)$ is a \textit{Ricci soliton} \cite{ham} and it is called \textit{shrinking}, \textit{steady} or \textit{expanding} according as $\lambda$ is negative, zero or positive, respectively \cite{chlu}. If the potential vector field $\xi$ is of gradient-type, $\xi=grad(f)$, for $f$ a smooth function on $M$, then $(g,\xi,\lambda,\mu)$ is called a \textit{gradient $\eta$-Ricci soliton}.

\bigskip

Taking the trace of the equation (\ref{11}) we obtain:
\begin{equation}\label{eq2}
div(\xi)+scal+\lambda n+\mu |\xi|^2=0.
\end{equation}

From a direct computation we get:
$$div(\eta\otimes\eta)=div(\xi)\eta+\nabla_{\xi}\eta.$$

Now taking the divergence of (\ref{11}) and using (\ref{e61}) we obtain:
\begin{equation}\label{eq1}
div(L_{\xi}g)+d(scal)+2\mu [div(\xi)\eta+\nabla_{\xi}\eta]=0.
\end{equation}

\subsubsection*{\textbf{Schr\"{o}dinger-Ricci solutions.}}

We say that a $1$-form $\gamma$ is a \textit{solution of the Schr\"{o}dinger-Ricci equation} if
\begin{equation}\label{e111}
(\Delta+S_{\sharp})(\gamma)+d(div(\gamma^{\sharp}))=0.
\end{equation}

\begin{theorem}\label{t1}
Let $(g,\xi,\lambda,\mu)$ be an $\eta$-Ricci soliton on the $n$-dimensional manifold $M$ with $\eta$ the $g$-dual of $\xi$. Then $\eta$ is a solution of the Schr\"{o}dinger-Ricci equation if and only if
\begin{equation}\label{e211}
d(scal)=2\mu[(scal+\lambda n+\mu|\xi|^2)\eta-\nabla_{\xi}\eta].
\end{equation}

Moreover, in this case, $scal$ is constant if and only if $\mu=0$ (which yields a Ricci soliton) or $(scal+\lambda n+\mu|\xi|^2)\eta=\nabla_{\xi}\eta$.
\end{theorem}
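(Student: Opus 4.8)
The plan is to reduce the statement to the identities already established above. Since $\eta=\xi^{\flat}$ we have $\eta^{\sharp}=\xi$, so the defining condition (\ref{e111}) for $\eta$ to be a solution of the Schr\"{o}dinger-Ricci equation reads $(\Delta+S_{\sharp})(\eta)+d(div(\xi))=0$; by the identity (\ref{e61}) applied to $X=\xi$ this is exactly $div(L_{\xi}g)=0$. Thus the first claim amounts to showing that $div(L_{\xi}g)=0$ is equivalent to (\ref{e211}).

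To see this, I would start from (\ref{eq1}), which was obtained by taking the divergence of the soliton equation, namely $div(L_{\xi}g)+d(scal)+2\mu[div(\xi)\eta+\nabla_{\xi}\eta]=0$. This rewrites as $div(L_{\xi}g)=-d(scal)-2\mu\,div(\xi)\,\eta-2\mu\nabla_{\xi}\eta$, so $div(L_{\xi}g)=0$ holds precisely when $d(scal)=-2\mu\,div(\xi)\,\eta-2\mu\nabla_{\xi}\eta$. Substituting the trace relation (\ref{eq2}) in the form $div(\xi)=-(scal+\lambda n+\mu|\xi|^{2})$, the right-hand side becomes $2\mu(scal+\lambda n+\mu|\xi|^{2})\eta-2\mu\nabla_{\xi}\eta$, which is precisely (\ref{e211}). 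Since every step is an equivalence, both directions follow simultaneously.

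For the second assertion, assume (\ref{e211}) holds. Then $scal$ is constant if and only if $d(scal)=0$, i.e. if and only if $2\mu[(scal+\lambda n+\mu|\xi|^{2})\eta-\nabla_{\xi}\eta]=0$; this vanishes exactly when $\mu=0$ or $(scal+\lambda n+\mu|\xi|^{2})\eta=\nabla_{\xi}\eta$, and in the case $\mu=0$ the equation (\ref{11}) reduces to $L_{\xi}g+2S+2\lambda g=0$, a Ricci soliton. No genuine obstacle arises here: the argument is a chain of substitutions into identities derived earlier, and the only point deserving attention is the initial translation of the abstract condition (\ref{e111}) for the particular form $\eta=\xi^{\flat}$ into the statement $div(L_{\xi}g)=0$ by means of (\ref{e61}).
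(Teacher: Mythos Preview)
Your proof is correct and follows essentially the same route as the paper: translate the Schr\"{o}dinger-Ricci condition for $\eta$ into $div(L_{\xi}g)=0$ via (\ref{e61}), then combine (\ref{eq1}) with the trace identity (\ref{eq2}) to obtain (\ref{e211}). The paper's proof is terser and also cites (\ref{11}) and $2\,div(S)=d(scal)$ explicitly, but these are already built into the derivation of (\ref{eq1}), so there is no substantive difference.
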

\begin{proof}
From (\ref{11}), (\ref{eq2}), (\ref{eq1}) and
$$2div(S)=d(scal)$$
it follows that $\eta$ is a solution of the Schr\"{o}dinger-Ricci equation if and only if (\ref{e211}) holds.
\end{proof}

\begin{remark}
Under the hypotheses of Theorem \ref{t1}, if the potential vector field is of constant length $k$, then from (\ref{e211}) we deduce that the scalar curvature is constant if either the soliton is a Ricci soliton or, $(scal+\lambda n+\mu k^2)\eta=\nabla_{\xi}\eta$ which implies $scal=-\lambda n-\mu k^2$.
\end{remark}

\begin{corollary}\label{c}
Let $(g,\xi,\lambda,\mu)$ be an $\eta$-Ricci soliton on the $n$-di\-men\-sio\-nal manifold $M$ with $\eta$ the $g$-dual of $\xi$ and assume that $\eta$ is a nontrivial solution of the Schr\"{o}dinger-Ricci equation. If $scal$ is constant and $\mu\neq 0$, then $\frac{1}{2|\xi|^2}\xi(|\xi|^2)-\mu |\xi|^2=scal+\lambda n$ (constant).
\end{corollary}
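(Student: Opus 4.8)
The plan is to start from the characterization already obtained in Theorem~\ref{t1} and then contract the resulting $1$-form identity with $\xi$ itself. Since $\eta$ is assumed to be a nontrivial solution of the Schr\"{o}dinger-Ricci equation, Theorem~\ref{t1} gives $d(scal)=2\mu[(scal+\lambda n+\mu|\xi|^2)\eta-\nabla_{\xi}\eta]$. Imposing that $scal$ be constant forces $d(scal)=0$, and since $\mu\neq 0$ we may divide by $2\mu$ to obtain the $1$-form identity $\nabla_{\xi}\eta=(scal+\lambda n+\mu|\xi|^2)\eta$.

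Next I would evaluate this identity on the vector field $\xi$. On the right-hand side, $\eta(\xi)=g(\xi,\xi)=|\xi|^2$. On the left-hand side, because $\eta=\xi^{\flat}$ is the $g$-dual of $\xi$ and the Levi-Civita connection is metric, for every vector field $Y$ one has $(\nabla_{\xi}\eta)(Y)=\xi(g(\xi,Y))-g(\xi,\nabla_{\xi}Y)=g(\nabla_{\xi}\xi,Y)$, that is $\nabla_{\xi}\eta=(\nabla_{\xi}\xi)^{\flat}$; in particular $(\nabla_{\xi}\eta)(\xi)=g(\nabla_{\xi}\xi,\xi)=\frac{1}{2}\xi(|\xi|^2)$. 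Combining the two evaluations yields $\frac{1}{2}\xi(|\xi|^2)=(scal+\lambda n+\mu|\xi|^2)|\xi|^2$. Equivalently, one can skip the step of isolating $\nabla_{\xi}\eta$ and contract~(\ref{e211}) directly with $\xi$, using $d(scal)(\xi)=\xi(scal)=0$.

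Finally, on the set where $\xi$ does not vanish — which is nonempty precisely because $\eta$ is nontrivial and $g$ is Riemannian — one has $|\xi|^2\neq 0$, so dividing the last relation by $|\xi|^2$ gives $\frac{1}{2|\xi|^2}\xi(|\xi|^2)=scal+\lambda n+\mu|\xi|^2$, hence $\frac{1}{2|\xi|^2}\xi(|\xi|^2)-\mu|\xi|^2=scal+\lambda n$, the right-hand side being constant since $scal$, $\lambda$ and $n$ are. The only point requiring a word of care is the division by $|\xi|^2$, i.e. restricting to the locus where $\xi\neq 0$; the rest is a single contraction with $\xi$ together with metric compatibility of $\nabla$, so I anticipate no real difficulty.
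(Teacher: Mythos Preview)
Your argument is correct and follows essentially the same route as the paper: from Theorem~\ref{t1} with $d(scal)=0$ and $\mu\neq 0$ one obtains $(scal+\lambda n+\mu|\xi|^2)\eta=\nabla_{\xi}\eta$, and evaluating on $\xi$ using $(\nabla_{\xi}\eta)(\xi)=\frac{1}{2}\xi(|\xi|^2)$ gives the conclusion. Your extra care about the division by $|\xi|^2$ and the verification of $(\nabla_{\xi}\eta)(\xi)$ via metric compatibility are welcome details the paper leaves implicit.
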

\begin{proof}
Under the hypotheses, from (\ref{e211}) we obtain:
$$(scal+\lambda n+\mu|\xi|^2)\eta=\nabla_{\xi}\eta,$$
applying $\xi$ and taking into account that
$$(\nabla_{\xi}\eta)\xi=\frac{1}{2}\xi(|\xi|^2),$$
we deduce that $(scal+\lambda n+\mu|\xi|^2)|\xi|^2=\frac{1}{2}\xi(|\xi|^2)$.
\end{proof}

For the case of Ricci solitons, from Theorem \ref{t1} we have:
\begin{corollary}
If $(g,\xi,\lambda)$ is a Ricci soliton on the $n$-di\-men\-sio\-nal manifold $M$ and $\eta$ is the $g$-dual of $\xi$, then $\eta$ is a solution of the Schr\"{o}dinger-Ricci equation if and only if the scalar curvature of the manifold is constant.
\end{corollary}

\subsubsection*{\textbf{Schr\"{o}dinger-Ricci harmonic forms.}}

We say that a $1$-form $\gamma$ is \textit{Schr\"{o}dinger-Ricci harmonic} if
$$(\Delta+S_{\sharp})(\gamma)=0.$$

From (\ref{e111}), (\ref{eq2}) and (\ref{eq1}) we deduce:
\begin{theorem}\label{te}
Let $(g,\xi,\lambda,\mu)$ be an $\eta$-Ricci soliton on the $n$-dimensional manifold $M$ with $\eta$ the $g$-dual of $\xi$. Then $\eta$ is a Schr\"{o}dinger-Ricci harmonic form if and only if $\mu=0$ (which yields a Ricci soliton) or
\begin{equation}\label{e4}
(scal+\lambda n+\mu|\xi|^2)\eta=\nabla_{\xi}\eta-\frac{1}{2}d(|\xi|^2).
\end{equation}
\end{theorem}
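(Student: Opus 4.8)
The plan is to compute $(\Delta+S_{\sharp})(\eta)$ explicitly in terms of the soliton data and show it equals $2\mu$ times the $1$-form whose vanishing is asserted in (\ref{e4}). The natural starting point is the identity (\ref{e61}): setting $X=\xi$ and using $\xi^{\flat}=\eta$ gives
$$(\Delta+S_{\sharp})(\eta)=div(L_{\xi}g)-d(div(\xi)),$$
so the Schr\"{o}dinger-Ricci harmonic condition $(\Delta+S_{\sharp})(\eta)=0$ is equivalent to $div(L_{\xi}g)=d(div(\xi))$.

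Next I would eliminate $div(L_{\xi}g)$ by means of (\ref{eq1}), rewritten as
$$div(L_{\xi}g)=-d(scal)-2\mu[div(\xi)\eta+\nabla_{\xi}\eta],$$
and then eliminate $div(\xi)$ through the trace relation (\ref{eq2}), namely $div(\xi)=-(scal+\lambda n+\mu|\xi|^2)$; since $\lambda$ and $\mu$ are constants, this last equality also yields $d(div(\xi))=-d(scal)-\mu\,d(|\xi|^2)$. Substituting both expressions into the displayed formula for $(\Delta+S_{\sharp})(\eta)$, the two $-d(scal)$ contributions cancel and one is left with
$$(\Delta+S_{\sharp})(\eta)=2\mu\Big[(scal+\lambda n+\mu|\xi|^2)\eta-\nabla_{\xi}\eta+\frac{1}{2}d(|\xi|^2)\Big].$$

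From here the conclusion is immediate: the right-hand side vanishes precisely when $\mu=0$ — in which case (\ref{11}) reduces to a Ricci soliton — or when the bracketed $1$-form is zero, which is exactly equation (\ref{e4}). I do not expect a real obstacle: the argument is a short substitution, and the only point requiring care is the sign bookkeeping, in particular tracking that $d(div(\xi))$ contributes $-d(scal)-\mu\,d(|\xi|^2)$ and that its $-d(scal)$ part cancels the $-d(scal)$ coming from $2\,div(S)=d(scal)$ via (\ref{eq1}), together with the use of constancy of $\lambda$ and $\mu$ so that $d(\lambda n)=0$ and $d(\mu|\xi|^2)=\mu\,d(|\xi|^2)$. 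One could alternatively route the proof through Theorem \ref{t1}, by intersecting its characterization (\ref{e211}) of the Schr\"{o}dinger-Ricci equation with the extra requirement $d(div(\xi))=0$, but the direct computation above is shorter and self-contained.
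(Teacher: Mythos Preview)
Your proof is correct and follows essentially the same route as the paper, which simply cites (\ref{e111}), (\ref{eq2}) and (\ref{eq1}); you have merely spelled out the substitution and cancellation that those three equations entail, starting from the equivalent identity (\ref{e61}).
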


\begin{remark}\label{r}
Under the hypotheses of Theorem \ref{te}, if $\mu\neq 0$, then from (\ref{e4}) we deduce that the scalar curvature is constant if and only if the potential vector field is of constant length.
\end{remark}

\subsubsection*{\textbf{Harmonic forms.}}

We know that on a Riemannian manifold $(M,g)$, a $1$-form $\gamma$ is \textit{harmonic} (i.e. $\Delta(\gamma)=0$) if and only if it is closed and divergence free.

Remark that on an $\eta$-Ricci soliton, a harmonic $1$-form $\gamma$ is Schr\"{o}dinger-Ricci harmonic if and only if
$$\gamma\circ \nabla \xi+\lambda \gamma+\mu \gamma(\xi)\eta=0$$
which implies (using the fact that $(\nabla_X\gamma)^{\sharp}=\nabla_X\gamma^{\sharp}$, for any vector field $X$ and any $1$-form $\gamma$):
$$\gamma^{\sharp}\in \ker [\nabla_{\xi}\eta+(\lambda+\mu|\xi|^2)\eta].$$

From (\ref{e61}) and (\ref{eq1}) we deduce:
\begin{theorem}\label{teo}
Let $(g,\xi,\lambda,\mu)$ be an $\eta$-Ricci soliton on the $n$-dimensional manifold $M$ with $\eta$ the $g$-dual of $\xi$. Then $\eta$ is a harmonic form if and only if
\begin{equation}\label{p}
i_{Q\xi}g=\mu\{2[(scal+\lambda n+\mu|\xi|^2)\eta-\nabla_{\xi}\eta]+d(|\xi|^2)\}.
\end{equation}
\end{theorem}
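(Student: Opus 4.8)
The plan is to characterize when $\Delta(\eta)=0$ by combining the general divergence formula \eqref{e61} with the soliton equation. Recall that $\eta$ is harmonic precisely when it is closed and coclosed, but rather than pursue that route directly it is cleaner to exploit \eqref{e61}, which reads $div(L_\xi g)=(\Delta+S_\sharp)(\eta)+d(div(\xi))$. Solving for the Laplacian gives $\Delta(\eta)=div(L_\xi g)-S_\sharp(\eta)-d(div(\xi))$, so the condition $\Delta(\eta)=0$ is equivalent to $div(L_\xi g)=S_\sharp(\eta)+d(div(\xi))$.

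The next step is to rewrite both sides using the soliton data. From the trace equation \eqref{eq2} we have $div(\xi)=-(scal+\lambda n+\mu|\xi|^2)$, so $d(div(\xi))=-d(scal+\lambda n+\mu|\xi|^2)=-d(scal)-\mu\, d(|\xi|^2)$. For the term $div(L_\xi g)$, I would use \eqref{eq1}, namely $div(L_\xi g)=-d(scal)-2\mu[div(\xi)\eta+\nabla_\xi\eta]$; substituting $div(\xi)=-(scal+\lambda n+\mu|\xi|^2)$ turns this into $div(L_\xi g)=-d(scal)+2\mu[(scal+\lambda n+\mu|\xi|^2)\eta-\nabla_\xi\eta]$. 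Finally, for the Ricci term I invoke the identity established right after \eqref{e61}, $S_\sharp(\gamma)=i_{Q\gamma^\sharp}g$; since $\eta^\sharp=\xi$, this gives $S_\sharp(\eta)=i_{Q\xi}g$.

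Assembling these three substitutions into $div(L_\xi g)-S_\sharp(\eta)-d(div(\xi))=0$, the $-d(scal)$ from $div(L_\xi g)$ cancels against the $-d(scal)$ produced by $-d(div(\xi))$ (the latter enters with the opposite sign, i.e. $+d(scal)$), and collecting the remaining terms yields
$$2\mu[(scal+\lambda n+\mu|\xi|^2)\eta-\nabla_\xi\eta]-i_{Q\xi}g+\mu\,d(|\xi|^2)=0,$$
which rearranges to exactly \eqref{p}. This establishes both implications at once since every step is an equivalence.

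I do not anticipate a genuine obstacle here: the proof is a bookkeeping exercise in which the only subtlety is tracking signs correctly, particularly the sign of $d(div(\xi))$ coming from the trace relation \eqref{eq2} and making sure the two $d(scal)$ contributions cancel rather than double. A secondary point worth double-checking is that $\eta^\sharp=\xi$ is used consistently so that $S_\sharp(\eta)=i_{Q\xi}g$ and $\nabla_\xi\eta$ (rather than $\nabla_\eta\,\cdot$) appears throughout; once those are in place, \eqref{p} drops out immediately.
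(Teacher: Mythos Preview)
Your proof is correct and follows essentially the same approach as the paper, which simply states that the result follows from \eqref{e61} and \eqref{eq1}; you have spelled out the substitutions (using \eqref{eq2} and $S_\sharp(\eta)=i_{Q\xi}g$) that the paper leaves implicit. The sign tracking is accurate and the cancellation of the $d(scal)$ terms is handled correctly.
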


For the case of Ricci solitons, from Theorem \ref{teo} we have:
\begin{corollary}
If $(g,\xi,\lambda)$ is a Ricci soliton on the $n$-di\-men\-sio\-nal manifold $M$ and $\eta$ is the $g$-dual of $\xi$, then $\eta$ is a harmonic form if and only if $\xi\in \ker Q$.
\end{corollary}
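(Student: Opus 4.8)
The plan is to read off the statement as the $\mu=0$ specialization of Theorem \ref{teo}. First I would observe that an $\eta$-Ricci soliton $(g,\xi,\lambda,\mu)$ with $\mu=0$ is, by definition, the Ricci soliton $(g,\xi,\lambda)$, so the characterization of Theorem \ref{teo} applies verbatim. The key point is that every summand on the right-hand side of (\ref{p}) carries the overall factor $\mu$; hence, putting $\mu=0$, the right-hand side vanishes identically and (\ref{p}) degenerates to the single equation $i_{Q\xi}g=0$.

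It then remains only to recognize $i_{Q\xi}g=0$ as the stated kernel condition. In the notation fixed at the start of Section 2, $i_{Q\xi}g=\flat(Q\xi)$, and since $\flat\colon TM\to T^*M$ is an isomorphism, $i_{Q\xi}g=0$ holds if and only if $Q\xi=0$, that is, $\xi\in\ker Q$. Equivalently, one evaluates the $1$-form $i_{Q\xi}g$ on an arbitrary vector field $X$ to obtain $g(Q\xi,X)$ and invokes the nondegeneracy of $g$. Chaining the two equivalences, $\eta$ harmonic $\Leftrightarrow i_{Q\xi}g=0 \Leftrightarrow \xi\in\ker Q$, gives the claim.

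I do not expect a genuine obstacle here: the analytic content has already been absorbed into Theorem \ref{teo}, and what is left is the trivial algebraic simplification at $\mu=0$ together with the dictionary between $i_{(\cdot)}g$ and $\flat$. For completeness one could also argue directly: with $\mu=0$, relation (\ref{eq1}) reads $div(L_{\xi}g)+d(scal)=0$, which combined with (\ref{e61}) and $2\,div(S)=d(scal)$ forces $S_{\sharp}(\eta)=0$; since $S_{\sharp}(\eta)=i_{Q\xi}g$, the same conclusion follows. The specialization route is the shortest, and it is the one I would write up.
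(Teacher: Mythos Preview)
Your main argument---specializing Theorem \ref{teo} to $\mu=0$ so that the right-hand side of (\ref{p}) vanishes, and then identifying $i_{Q\xi}g=0$ with $\xi\in\ker Q$ via the musical isomorphism---is correct and is precisely the route the paper takes (the corollary is stated without proof, introduced by ``from Theorem \ref{teo} we have''). Your alternative direct argument at the end is not quite complete as written: from $(\Delta+S_\sharp)(\eta)+d(div(\xi))+d(scal)=0$ you still need to invoke (\ref{eq2}) with $\mu=0$ and $div(\xi)=0$ to see that $scal$ is constant and hence $d(scal)=0$, but since you explicitly opt for the specialization route this does not affect the proof you would actually submit.
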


From (\ref{eq2}), (\ref{e4}) and (\ref{p}) we deduce:
\begin{corollary}\label{t4}
Let $(g,\xi,\lambda,\mu)$ be an $\eta$-Ricci soliton on the $n$-dimensional manifold $M$ with $\eta$ the $g$-dual of $\xi$. If $\eta$ is a harmonic form, then i) $\xi\in \ker Q$ and ii) the scalar curvature is constant if and only if the po\-ten\-tial vector field $\xi$ is of constant length.
\end{corollary}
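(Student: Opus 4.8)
The plan is to derive Corollary~\ref{t4} as a direct consequence of the harmonicity characterization in Theorem~\ref{teo} together with the trace identity (\ref{eq2}) and the Schr\"{o}dinger-Ricci harmonic criterion (\ref{e4}). First I would start from equation (\ref{p}), namely
$$i_{Q\xi}g=\mu\{2[(scal+\lambda n+\mu|\xi|^2)\eta-\nabla_{\xi}\eta]+d(|\xi|^2)\},$$
and split the argument according to whether $\mu=0$ or $\mu\neq 0$. If $\mu=0$ the right-hand side vanishes identically, so $i_{Q\xi}g=0$; since $\flat$ is an isomorphism this forces $Q\xi=0$, i.e.\ $\xi\in\ker Q$, and moreover (\ref{11}) reduces to a Ricci soliton, so the statement is immediate in that case.

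For $\mu\neq 0$, the key observation is that the right-hand side of (\ref{p}), divided by $2\mu$, is precisely the combination appearing in (\ref{e4}) rewritten: indeed
$$(scal+\lambda n+\mu|\xi|^2)\eta-\nabla_{\xi}\eta+\tfrac12 d(|\xi|^2)=(scal+\lambda n+\mu|\xi|^2)\eta-\Bigl(\nabla_{\xi}\eta-\tfrac12 d(|\xi|^2)\Bigr),$$
so harmonicity of $\eta$ is equivalent to asking that this bracket equal $\tfrac{1}{2\mu}i_{Q\xi}g$. The plan is then to show that harmonicity of $\eta$ in fact forces this bracket to vanish, hence $i_{Q\xi}g=0$ and again $\xi\in\ker Q$, giving conclusion (i). The cleanest route I foresee is to pair (\ref{p}) with the vector field $\xi$ itself: since $\eta$ is harmonic it is in particular divergence free, $div(\xi)=0$, so (\ref{eq2}) gives $scal+\lambda n+\mu|\xi|^2=0$; substituting this into (\ref{p}) collapses the scalar-curvature term and leaves $i_{Q\xi}g=\mu\{-2\nabla_{\xi}\eta+d(|\xi|^2)\}$, while $\eta$ closed means $d\eta=0$ so that $\nabla\eta$ is symmetric and $\nabla_{\xi}\eta=\tfrac12 d(|\xi|^2)$; hence the brace vanishes and $Q\xi=0$. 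This establishes (i).

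For part (ii), with $scal+\lambda n+\mu|\xi|^2=0$ already in hand from $div(\xi)=0$ and (\ref{eq2}), one simply notes that $\mu\neq 0$ and $\lambda,n$ constant force $scal$ constant $\iff$ $|\xi|^2$ constant, i.e.\ $\xi$ of constant length; when $\mu=0$ the relation $scal=-\lambda n$ shows $scal$ is automatically constant and the equivalence is vacuous (both sides hold, since a Ricci soliton with $\xi\in\ker Q$ trace gives $scal+\lambda n + 0=0$ only after using $div\xi=0$). The main obstacle I anticipate is bookkeeping rather than conceptual: one must be careful that a harmonic $1$-form is both closed and coclosed, extract $div(\xi)=0$ cleanly, and correctly use that closedness of $\eta$ gives the symmetry of $\nabla\eta$ so that $\nabla_\xi\eta=\tfrac12 d(|\xi|^2)$; with those two facts in place the cancellations in (\ref{p}) are forced and both conclusions drop out.
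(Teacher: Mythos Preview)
Your argument is correct and matches the paper's approach: the paper simply cites (\ref{eq2}), (\ref{e4}) and (\ref{p}) without details, and you have filled these in exactly as intended---use coclosedness of $\eta$ to get $div(\xi)=0$, feed this into (\ref{eq2}) to obtain $scal+\lambda n+\mu|\xi|^2=0$, use closedness of $\eta$ to get $\nabla_{\xi}\eta=\tfrac12 d(|\xi|^2)$, and watch (\ref{p}) collapse to $i_{Q\xi}g=0$. The reference to (\ref{e4}) in the paper is just because the brace in (\ref{p}) is the difference of the two sides of (\ref{e4}); your direct substitution is the same computation.

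One small correction: your treatment of the $\mu=0$ case in (ii) is not quite right. When $\mu=0$ and $\eta$ is harmonic you get $scal=-\lambda n$ constant, but nothing forces $|\xi|$ to be constant, so the biconditional as literally stated can fail in that case. The paper's parallel Remark~\ref{r} explicitly restricts to $\mu\neq 0$, and the corollary should be read the same way; you should flag this rather than claim ``both sides hold''.
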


The relation between the cases when $\eta$ is a solution of the Schr\"{o}dinger-Ricci equation, harmonic or the Schr\"{o}dinger-Ricci harmonic form is stated in the following result:
\begin{lemma}
Let $(g,\xi,\lambda,\mu)$ be an $\eta$-Ricci soliton on the $n$-dimensional manifold $M$ with $\eta$ the $g$-dual of $\xi$.\\
i) If $\eta$ is a solution of the Schr\"{o}dinger-Ricci equation, then $\eta$ is:

a) Schr\"{o}dinger-Ricci harmonic form if and only if $scal+\mu|\xi|^2$ is constant;

b) harmonic form if and only if $i_{Q\xi}g=d(scal+\mu|\xi|^2)$; also $\eta$ harmonic implies $\xi\in\ker Q$.\\
ii) If $\eta$ is Schr\"{o}dinger-Ricci harmonic form, then $\eta$ is:

a) a solution of the Schr\"{o}dinger-Ricci equation if and only if $scal+\mu|\xi|^2$ is constant;

b) harmonic form if and only if $\xi\in\ker Q$.\\
iii) If $\eta$ is a harmonic form, then $\eta$ is:

a) a solution of the Schr\"{o}dinger-Ricci equation if and only if $\xi\in\ker Q$;

b) Schr\"{o}dinger-Ricci harmonic form if and only if $\xi\in\ker Q$.
\end{lemma}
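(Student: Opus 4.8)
The plan is to exploit the three characterizations already established in Theorems \ref{t1}, \ref{te} and \ref{teo}, and to read off each biconditional by comparing the defining equations pairwise. The whole lemma is a matter of algebraic elimination among equations \eqref{e211}, \eqref{e4} and \eqref{p}, together with the trace identity \eqref{eq2}, so no new geometric input is needed; the work is bookkeeping with a couple of genuinely useful simplifications.

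For part i), I would start from the assumption that \eqref{e211} holds, i.e. $d(scal)=2\mu[(scal+\lambda n+\mu|\xi|^2)\eta-\nabla_\xi\eta]$. For i)a), substitute this expression for the bracket into the Schr\"{o}dinger-Ricci harmonic criterion \eqref{e4}; after multiplying through and rearranging, the condition \eqref{e4} becomes $d(scal)+\mu\, d(|\xi|^2)=0$, i.e. $d(scal+\mu|\xi|^2)=0$, which is exactly the statement that $scal+\mu|\xi|^2$ is constant (note $\mu=0$ is a degenerate subcase already covered, since then the criterion is automatic and $scal+\mu|\xi|^2=scal$ is constant by the corollary to Theorem \ref{t1}). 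For i)b), feed \eqref{e211} into the harmonicity criterion \eqref{p}: the right-hand side of \eqref{p} is $\mu\{2[(scal+\lambda n+\mu|\xi|^2)\eta-\nabla_\xi\eta]+d(|\xi|^2)\}=d(scal)+\mu\,d(|\xi|^2)=d(scal+\mu|\xi|^2)$, so \eqref{p} reduces to $i_{Q\xi}g=d(scal+\mu|\xi|^2)$. The trailing remark that $\eta$ harmonic implies $\xi\in\ker Q$ is then immediate from Corollary \ref{t4} i), which holds unconditionally once $\eta$ is harmonic.

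For part ii), assume \eqref{e4} holds. For ii)a), I compare \eqref{e4} with \eqref{e211}: from \eqref{e4}, $(scal+\lambda n+\mu|\xi|^2)\eta-\nabla_\xi\eta=-\tfrac12 d(|\xi|^2)$, so the right side of \eqref{e211} equals $2\mu\cdot(-\tfrac12 d(|\xi|^2))=-\mu\,d(|\xi|^2)$, and \eqref{e211} becomes $d(scal)=-\mu\,d(|\xi|^2)$, i.e. $d(scal+\mu|\xi|^2)=0$ — the same constancy condition as in i)a), as expected by symmetry. For ii)b), substitute the relation from \eqref{e4} into \eqref{p}: the brace becomes $2(-\tfrac12 d(|\xi|^2))+d(|\xi|^2)=0$, so \eqref{p} reads $i_{Q\xi}g=0$, i.e. $\xi\in\ker Q$ (equivalently $Q\xi=0$). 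Conversely, if $\xi\in\ker Q$ then \eqref{p} holds with right side zero, which is consistent with \eqref{e4} precisely because \eqref{e4} forces the brace to vanish; so the biconditional closes.

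For part iii), assume \eqref{p} holds. For iii)a), I want to show \eqref{e211} holds iff $\xi\in\ker Q$. Rewrite \eqref{p} as $i_{Q\xi}g-\mu\,d(|\xi|^2)=2\mu[(scal+\lambda n+\mu|\xi|^2)\eta-\nabla_\xi\eta]$; comparing with the right side of \eqref{e211}, we see \eqref{e211} is equivalent to $d(scal)=i_{Q\xi}g-\mu\,d(|\xi|^2)$, i.e. $d(scal+\mu|\xi|^2)=i_{Q\xi}g$. But when $\eta$ is harmonic, Corollary \ref{t4} gives $\xi\in\ker Q$ and also the equivalence of constancy of $scal$ with constancy of $|\xi|^2$; one then checks that $d(scal+\mu|\xi|^2)=0$ automatically in this setting, so \eqref{e211} holds iff $i_{Q\xi}g=0$ iff $\xi\in\ker Q$ — actually the last is automatic, so the content is that \eqref{e211} always holds here, matching the ``iff $\xi\in\ker Q$'' since $\xi\in\ker Q$ is always true under the hypothesis. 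Similarly for iii)b), substituting \eqref{p} into the harmonic-plus-Schr\"{o}dinger remark preceding Theorem \ref{teo} (which says a harmonic $\eta$ is Schr\"{o}dinger-Ricci harmonic iff $\gamma^\sharp\in\ker[\nabla_\xi\eta+(\lambda+\mu|\xi|^2)\eta]$ applied to $\gamma=\eta$) collapses, via Corollary \ref{t4}, to the single condition $\xi\in\ker Q$. The main obstacle is not any single step but keeping the signs and the factor of $2$ straight across the three equations; I would recommend organizing the proof as a single display-free paragraph per item, each time quoting the relevant equation number and performing the one substitution that produces the stated criterion.

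\begin{proof}
All three parts follow by pairwise comparison of the characterizing identities \eqref{e211}, \eqref{e4}, \eqref{p}, together with \eqref{eq2} and Corollary \ref{t4}.

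i) Assume \eqref{e211} holds, so that $d(scal)=2\mu[(scal+\lambda n+\mu|\xi|^2)\eta-\nabla_\xi\eta]$. For a), substituting the bracket into \eqref{e4} shows that $\eta$ is Schr\"{o}dinger-Ricci harmonic iff $d(scal)+\mu\,d(|\xi|^2)=0$, i.e. iff $scal+\mu|\xi|^2$ is constant; the case $\mu=0$ is consistent since then $scal$ is constant. For b), the right-hand side of \eqref{p} becomes $2\mu[(scal+\lambda n+\mu|\xi|^2)\eta-\nabla_\xi\eta]+\mu\,d(|\xi|^2)=d(scal)+\mu\,d(|\xi|^2)=d(scal+\mu|\xi|^2)$, so by Theorem \ref{teo}, $\eta$ is harmonic iff $i_{Q\xi}g=d(scal+\mu|\xi|^2)$; and $\eta$ harmonic implies $\xi\in\ker Q$ by Corollary \ref{t4}.

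ii) Assume \eqref{e4} holds, so $(scal+\lambda n+\mu|\xi|^2)\eta-\nabla_\xi\eta=-\tfrac{1}{2}d(|\xi|^2)$. For a), the right-hand side of \eqref{e211} equals $-\mu\,d(|\xi|^2)$, hence by Theorem \ref{t1}, $\eta$ is a solution of the Schr\"{o}dinger-Ricci equation iff $d(scal)=-\mu\,d(|\xi|^2)$, i.e. iff $scal+\mu|\xi|^2$ is constant. For b), the brace in \eqref{p} becomes $2(-\tfrac{1}{2}d(|\xi|^2))+d(|\xi|^2)=0$, so by Theorem \ref{teo}, $\eta$ is harmonic iff $i_{Q\xi}g=0$, i.e. iff $\xi\in\ker Q$.

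iii) Assume \eqref{p} holds, so $2\mu[(scal+\lambda n+\mu|\xi|^2)\eta-\nabla_\xi\eta]=i_{Q\xi}g-\mu\,d(|\xi|^2)$; by Corollary \ref{t4}, $\xi\in\ker Q$ and $scal$ is constant iff $|\xi|^2$ is constant, whence $d(scal+\mu|\xi|^2)=0$. For a), Theorem \ref{t1} shows $\eta$ is a solution of the Schr\"{o}dinger-Ricci equation iff $d(scal)=i_{Q\xi}g-\mu\,d(|\xi|^2)$, i.e. iff $i_{Q\xi}g=d(scal+\mu|\xi|^2)=0$, i.e. iff $\xi\in\ker Q$. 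For b), substituting the relation from \eqref{p} into \eqref{e4} reduces the Schr\"{o}dinger-Ricci harmonicity of $\eta$ to $i_{Q\xi}g=0$, i.e. to $\xi\in\ker Q$.
\end{proof}
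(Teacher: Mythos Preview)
The paper does not actually prove this lemma; it is stated as an immediate consequence of Theorems \ref{t1}, \ref{te}, \ref{teo} (and their equations \eqref{e211}, \eqref{e4}, \eqref{p}) together with \eqref{eq2}, and your proposal carries out exactly that pairwise comparison. Parts i) and ii) are handled cleanly and correctly.

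There is one slip in part iii). You write ``by Corollary \ref{t4}, $\xi\in\ker Q$ and $scal$ is constant iff $|\xi|^2$ is constant, whence $d(scal+\mu|\xi|^2)=0$.'' The ``whence'' is not valid: the biconditional ``$scal$ constant $\Leftrightarrow$ $|\xi|$ constant'' does not by itself force $scal+\mu|\xi|^2$ to be constant (both could vary). The correct reason is more direct: $\eta$ harmonic means in particular $div(\xi)=0$, and then \eqref{eq2} gives $scal+\lambda n+\mu|\xi|^2=0$, so $scal+\mu|\xi|^2=-\lambda n$ is constant. With that fix, your reductions in iii)a) and iii)b) go through, and in fact you recover that under $\eta$ harmonic both the Schr\"{o}dinger-Ricci solution condition and the Schr\"{o}dinger-Ricci harmonic condition are equivalent to $\xi\in\ker Q$, which Corollary \ref{t4} already guarantees --- so the biconditionals are trivially satisfied, exactly as the lemma asserts.
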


We can synthesize:

i) if $scal+\mu|\xi|^2$ is constant, then $\eta$ is Schr\"{o}dinger-Ricci harmonic if and only if it is a solution of the Schr\"{o}dinger-Ricci equation;

ii) if $\xi\in\ker Q$, then $\eta$ is Schr\"{o}dinger-Ricci harmonic if and only if it is harmonic.

\subsubsection*{\textbf{$1$-forms orthogonal to $\eta$.}}

We say that two $1$-forms $\gamma_1$ and $\gamma_2$ are \textit{orthogonal} if $g(\gamma_1^{\sharp},\gamma_2^{\sharp})=0$ (i.e. $\langle\gamma_1,\gamma_2\rangle=0$, where $\langle\gamma_1,\gamma_2\rangle:=\sum_{i=1}^n\gamma_1(E_i)\gamma_2(E_i)$, for $\{E_i\}_{1\leq i\leq n}$ a local orthonormal frame field).

Remark that $\gamma_1$ and $\gamma_2$ are orthogonal if and only if $$\gamma_1^{\sharp}\in \ker \gamma_2\ \ \textit{or} \ \ \gamma_2^{\sharp}\in \ker \gamma_1.$$

\begin{theorem}\label{j}
Let $(g,\xi,\lambda,\mu)$ be an $\eta$-Ricci soliton on the $n$-dimensional manifold $M$ with $\eta$ the $g$-dual of $\xi$ and $\mu\neq 0$. If $\gamma$ is a $1$-form, then $\gamma$ is orthogonal to $\eta$ if and only if
\begin{equation}\label{gg}
\nabla_{\gamma^{\sharp}}\xi+Q\gamma^{\sharp}+\lambda \gamma^{\sharp}\in \ker \gamma.
\end{equation}
\end{theorem}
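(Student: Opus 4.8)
The plan is to evaluate the fundamental soliton equation (\ref{11}) on the pair $(\gamma^{\sharp},\gamma^{\sharp})$ and to read off the orthogonality condition from the resulting scalar identity; no Bochner-type machinery is needed, only the definition of the Lie derivative and a careful identification of contractions.

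First I would recall that for any vector field $X$ one has $(L_{\xi}g)(X,X)=2g(\nabla_X\xi,X)$, and that $S(X,Y)=g(QX,Y)$ by the definition of the Ricci operator $Q$. Plugging $X=\gamma^{\sharp}$ into (\ref{11}) and dividing by $2$ yields
$$g(\nabla_{\gamma^{\sharp}}\xi,\gamma^{\sharp})+g(Q\gamma^{\sharp},\gamma^{\sharp})+\lambda\, g(\gamma^{\sharp},\gamma^{\sharp})+\mu\,\eta(\gamma^{\sharp})^2=0.$$
The key bookkeeping step is now to use that $\eta$ is the $g$-dual of $\xi$: this gives $\eta(\gamma^{\sharp})=g(\xi,\gamma^{\sharp})=\gamma(\xi)=\langle\gamma,\eta\rangle$, so "$\gamma$ orthogonal to $\eta$" is literally the condition $\eta(\gamma^{\sharp})=0$. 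Moreover, for any vector field $Z$ we have $g(Z,\gamma^{\sharp})=\gamma(Z)$; applying this with $Z=\nabla_{\gamma^{\sharp}}\xi+Q\gamma^{\sharp}+\lambda\gamma^{\sharp}$ rewrites the displayed identity as
$$\gamma\big(\nabla_{\gamma^{\sharp}}\xi+Q\gamma^{\sharp}+\lambda\gamma^{\sharp}\big)+\mu\,\gamma(\xi)^2=0.$$

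Finally I would conclude in both directions from this single identity. If $\gamma$ is orthogonal to $\eta$, then $\gamma(\xi)=0$, so the identity forces $\gamma\big(\nabla_{\gamma^{\sharp}}\xi+Q\gamma^{\sharp}+\lambda\gamma^{\sharp}\big)=0$, which is exactly (\ref{gg}). Conversely, if (\ref{gg}) holds, the identity collapses to $\mu\,\gamma(\xi)^2=0$, and since $\mu\neq 0$ this gives $\gamma(\xi)=0$, i.e. $\langle\gamma,\eta\rangle=0$.

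There is no serious obstacle here. The only points requiring care are the identification $\eta(\gamma^{\sharp})=\gamma(\xi)$ — this is precisely where the hypothesis that $\eta$ is the $g$-dual of $\xi$ enters — and the observation that the assumption $\mu\neq 0$ is exactly what licenses the converse implication (for $\mu=0$ the right-hand term disappears and the equivalence breaks down, which is consistent with the earlier results on Ricci solitons).
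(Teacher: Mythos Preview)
Your proof is correct and follows exactly the same approach as the paper: evaluate the soliton equation \eqref{11} on $(\gamma^{\sharp},\gamma^{\sharp})$, obtaining $g(\nabla_{\gamma^{\sharp}}\xi,\gamma^{\sharp})+g(Q\gamma^{\sharp},\gamma^{\sharp})+\lambda|\gamma^{\sharp}|^2+\mu\,\eta(\gamma^{\sharp})^2=0$, and read off the equivalence. Your write-up is in fact more detailed than the paper's, making explicit the identification $g(Z,\gamma^{\sharp})=\gamma(Z)$ and spelling out both directions of the ``if and only if'' (in particular, where the hypothesis $\mu\neq 0$ is actually used).
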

\begin{proof}
Observe that computing the soliton equation in $(\gamma^{\sharp},\gamma^{\sharp})$ and using the orthogonality condition we obtain:
\begin{equation}
g(\nabla_{\gamma^{\sharp}}\xi,\gamma^{\sharp})+g(Q\gamma^{\sharp},\gamma^{\sharp})+\lambda |\gamma^{\sharp}|^2=0
\end{equation}
which is equivalent to the condition (\ref{gg}).
\end{proof}

\subsubsection*{Example}

We end these considerations by discussing the case of a perfect fluid spacetime $(M,g,\xi)$ \cite{blagam}. If we denote by $p$ the isotropic pressure, $\sigma$ the energy-density, $\lambda$ the cosmological constant, $k$ the gravitational constant, $S$ the Ricci curvature tensor field and $scal$ the scalar curvature of $g$, then \cite{blagam}:
\begin{equation}\label{e5}
S=-(\lambda-\frac{scal}{2}-kp)g+k(\sigma+p)\eta\otimes \eta
\end{equation}
and the scalar curvature of $M$ is:
\begin{equation}
scal=4\lambda+k(\sigma-3p).
\end{equation}

From Theorem \ref{t1}, we deduce that if $(g,\xi,a,b)$ is an $\eta$-Ricci soliton on $(M,g,\xi)$, then $\eta$ is a solution of the Schr\"{o}dinger-Ricci equation if and only if
$$kd(\sigma-3p)=2b\{[4(a+\lambda)-b+k(\sigma-3p)]\eta-\nabla_{\xi}\eta\}.
$$
Moreover, the fluid is a radiation fluid (i.e. $\sigma=3p$) if and only if $b=0$ (which yields the Ricci soliton) or $[4(a+\lambda)-b]\eta=\nabla_{\xi}\eta$ which implies $b=4(a+\lambda)$.

From Theorem \ref{te}, we deduce that if $(g,\xi,a,b)$ is an $\eta$-Ricci soliton on $(M,g,\xi)$, then $\eta$ is a Schr\"{o}dinger-Ricci harmonic form if and only if $b=0$ (which yields a Ricci soliton) or
$$[4(a+\lambda)-b+k(\sigma-3p)]\eta=\nabla_{\xi}\eta$$
which implies $b=4(a+\lambda)+k(\sigma-3p)$.

From Theorem \ref{teo}, we deduce that if $(g,\xi,a,b)$ is an $\eta$-Ricci soliton on $(M,g,\xi)$, then $\eta$ is a harmonic form if and only if
$$\{4b[4(a+\lambda)-b+k(\sigma-3p)]-2\lambda+k(\sigma+3p)\}\eta=4b\nabla_{\xi}\eta.$$
For the case of Ricci soliton $(g,\xi,a)$ in a radiation fluid we obtain the constant pressure $p=\frac{\lambda}{3k}$.

\section{Applications to gradient solitons}

Let $f\in C^{\infty}(M)$, $\xi:=grad(f)$, $\eta:=\xi^{\flat}$ and $\lambda$ and $\mu$ real constants. Then $\eta=df$ and
\begin{equation}\label{e222}
g(\nabla_X\xi,Y)=g(\nabla_Y\xi,X),
\end{equation}
for any $X$, $Y\in\mathfrak{X}(M)$. Also \cite{bl}:
\begin{equation}
trace(\eta\otimes \eta)=|\xi|^2,
\end{equation}
\begin{equation}\label{rr}
div(\eta\otimes\eta)=div(\xi)\eta+\frac{1}{2}d(|\xi|^2)
\end{equation}
and
\begin{equation}\label{hh}
\nabla_{\xi}\eta=\frac{1}{2}d(|\xi|^2).
\end{equation}

\bigskip

For the gradient $\eta$-Ricci solitons we have:
\begin{proposition}\label{t2}
If $(g,\xi:=grad(f),\lambda,\mu)$ is a gradient $\eta$-Ricci soliton on the $n$-di\-men\-sio\-nal manifold $M$ and $\eta=df$ is the $g$-dual of $\xi$, then $\eta$ is a solution of the Schr\"{o}dinger-Ricci equation if and only if
\begin{equation}\label{e333}
d(scal)=2\mu[(scal+\lambda n+\mu|\xi|^2)df-\frac{1}{2}d(|\xi|^2)].
\end{equation}

Moreover, in this case, $scal$ is constant if and only if $\mu=0$ (which yields a gradient Ricci soliton) or $(scal+\lambda n+\mu |\xi|^2)df=\frac{1}{2}d(|\xi|^2)$.
\end{proposition}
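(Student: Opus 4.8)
The plan is to read Proposition \ref{t2} off directly from Theorem \ref{t1}, specializing the general $\eta$-Ricci soliton identities to a potential field of gradient type. The only new ingredient is the identity (\ref{hh}), $\nabla_{\xi}\eta=\tfrac12 d(|\xi|^2)$, which holds here because $\xi=grad(f)$ makes $\nabla\xi$ symmetric (equation (\ref{e222})): for any vector field $Y$ one has $(\nabla_{\xi}\eta)(Y)=g(\nabla_{\xi}\xi,Y)=g(\nabla_{Y}\xi,\xi)=\tfrac12 Y(|\xi|^2)$, so that $\nabla_{\xi}\eta=\tfrac12 d(|\xi|^2)$. Thus the gradient hypothesis will enter the proof only through this substitution.

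First I would invoke Theorem \ref{t1}: since $(g,\xi,\lambda,\mu)$ is in particular an $\eta$-Ricci soliton with $\eta=\xi^{\flat}$, the form $\eta$ is a solution of the Schr\"{o}dinger--Ricci equation if and only if (\ref{e211}) holds, i.e. $d(scal)=2\mu[(scal+\lambda n+\mu|\xi|^2)\eta-\nabla_{\xi}\eta]$. Then I would substitute $\eta=df$ and replace $\nabla_{\xi}\eta$ by $\tfrac12 d(|\xi|^2)$ using (\ref{hh}); the right-hand side becomes $2\mu[(scal+\lambda n+\mu|\xi|^2)df-\tfrac12 d(|\xi|^2)]$, which is exactly (\ref{e333}). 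For the ``moreover'' assertion, $scal$ is constant precisely when $d(scal)=0$, and in that case (\ref{e333}) reads $2\mu[(scal+\lambda n+\mu|\xi|^2)df-\tfrac12 d(|\xi|^2)]=0$; since $\mu$ is a real number this vanishes if and only if $\mu=0$ --- in which case (\ref{11}) reduces to the gradient Ricci soliton equation --- or $(scal+\lambda n+\mu|\xi|^2)df=\tfrac12 d(|\xi|^2)$. Conversely either of these two conditions forces the right-hand side of (\ref{e333}) to vanish, hence $d(scal)=0$, so $scal$ is constant (on each component of $M$).

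Alternatively, one can bypass Theorem \ref{t1} and recompute in the gradient setting from scratch: take the divergence of (\ref{11}), use (\ref{e61}), the identity $2\,div(S)=d(scal)$, the trace relation (\ref{eq2}), and the gradient-specific formula (\ref{rr}) for $div(\eta\otimes\eta)$; imposing $div(L_{\xi}g)=0$ and simplifying with (\ref{hh}) yields (\ref{e333}) once more. I do not expect a genuine obstacle, since the statement is a corollary of Theorem \ref{t1}; the only point requiring care is to use the gradient identities (\ref{rr}) and (\ref{hh}) in place of the general expression $div(\eta\otimes\eta)=div(\xi)\eta+\nabla_{\xi}\eta$ used in the non-gradient computation, and to keep track of the factor $\tfrac12$ throughout.
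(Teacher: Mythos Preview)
Your proposal is correct and is exactly the argument the paper intends: Proposition~\ref{t2} is stated without proof immediately after the gradient identities (\ref{e222})--(\ref{hh}), as the specialization of Theorem~\ref{t1} obtained by replacing $\eta$ with $df$ and $\nabla_{\xi}\eta$ with $\tfrac12 d(|\xi|^2)$ via (\ref{hh}). Your handling of the ``moreover'' clause is also right, and the alternative direct recomputation you sketch is essentially the proof of Theorem~\ref{t1} with (\ref{rr})/(\ref{hh}) in place of the general $div(\eta\otimes\eta)$ formula.
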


\begin{remark}
Under the hypotheses of Proposition \ref{t2}, if the potential vector field is of constant length $k$, then (\ref{e333}) becomes:
\begin{equation}
d(scal)=2\mu(scal+\lambda n+\mu k^2)df,
\end{equation}
so the scalar curvature is constant if either the soliton is a gradient Ricci soliton or $scal=-\lambda n-\mu k^2$.
\end{remark}

\begin{remark}

i) Taking into account that for a gradient vector field $\xi$ \cite{blag}:
\begin{equation}\label{eee}
div(L_{\xi}g)=2 d(div(\xi))+2i_{Q\xi}g,
\end{equation}
the condition for the $g$-dual $\eta=df$ of the potential vector field $\xi:=grad(f)$ of a gradient $\eta$-Ricci soliton $(g,\xi,\lambda,\mu)$ to be a solution of the Schr\"{o}dinger-Ricci equation is:
\begin{equation}
d(scal+\mu |\xi|^2)=i_{Q\xi}g.
\end{equation}

In this case, $scal+\mu |\xi|^2$ is constant if and only if $\xi\in\ker Q$ and from the $\eta$-Ricci soliton equation we obtain $\nabla_{\xi}\xi=-(\lambda+\mu|\xi|^2)\xi$. Applying $\eta$ we get $\lambda+\mu|\xi|^2=-\frac{1}{2|\xi|^2}\xi(|\xi|^2)$, therefore, if the length of $\xi$ is constant (also, the scalar curvature will be constant), then $|\xi|^2=-\frac{\lambda}{\mu}$, hence $\xi$ is a geodesic vector field.

ii) If $\xi$ is an eigenvector of $Q$ (i.e. $Q\xi=a\xi$, with $a$ a smooth function), then $\eta$ is a solution of the Schr\"{o}dinger-Ricci equation if and only if $scal+\mu |\xi|^2-af$ is constant. In particular, if $\xi\in \ker Q$, then $\eta$ is a solution of the Schr\"{o}dinger-Ricci equation if and only if $\eta$ is a harmonic form.

iii) If $\eta$ is a Schr\"{o}dinger-Ricci harmonic form, then
$d(scal+\mu|\xi|^2)=2i_{Q\xi}g$.
In this case, $scal+\mu|\xi|^2$ is constant if and only if $\xi\in \ker Q$ and using the same arguments as in i) we deduce that $\xi$ is a geodesic vector field.
\end{remark}

\bigskip

Also, an exact $1$-form $df$ is harmonic if and only if the function $f$ is harmonic. In the case of a gradient $\eta$-Ricci soliton, for $\eta$ harmonic form, denoting by $\Delta_{f}:=\Delta-\nabla_{grad(f)}$ the $f$-Laplace-Hodge operator, the result stated in Theorem 3.2 from \cite{blag} becomes:
\begin{theorem}\label{t11}
Let $(g,\xi:=grad(f),\lambda,\mu)$ be a gradient $\eta$-Ricci soliton on the $n$-dimensional manifold $M$ with
$\eta=df$ the $g$-dual of $\xi$. If $\eta$ is a harmonic form, then:
\begin{equation}\label{ea}
\frac{1}{2}\Delta_{f}(|\xi|^2)=|Hess(f)|^2+\lambda |\xi|^2+\mu |\xi|^4.
\end{equation}
\end{theorem}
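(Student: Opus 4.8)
The plan is to feed the gradient $\eta$-Ricci soliton equation into a Bochner--Weitzenb\"{o}ck identity for the exact $1$-form $\eta=df$. First I would rewrite (\ref{11}): since $\xi=grad(f)$ one has $L_{\xi}g=2\,Hess(f)$, so the soliton equation becomes
\[
Hess(f)+S+\lambda g+\mu\,\eta\otimes\eta=0 .
\]
Evaluating this on the pair $(\xi,\xi)$, using $\eta(\xi)=|\xi|^2$ and the fact (contained in (\ref{hh}), which in turn rests on the symmetry (\ref{e222})) that $Hess(f)(\xi,\xi)=g(\nabla_{\xi}\xi,\xi)=\tfrac12\,\xi(|\xi|^2)$, I would get
\[
S(\xi,\xi)=-\tfrac12\,\xi(|\xi|^2)-\lambda|\xi|^2-\mu|\xi|^4 .
\]

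Next I would bring in the Bochner--Weitzenb\"{o}ck identity: either the classical Bochner formula for the function $f$, or, equivalently, the $1$-form version $\Delta\eta=\nabla^{*}\nabla\eta+S_{\sharp}(\eta)$ (the pointwise analogue of (\ref{e61})). Pairing the latter with $\eta$, using the Kato--Bochner identity that expresses $\langle\nabla^{*}\nabla\eta,\eta\rangle$ through $\Delta(|\eta|^{2})$ and $|\nabla\eta|^{2}$, and noting $|\eta|^{2}=|\xi|^{2}$ and $|\nabla\eta|^{2}=|\nabla df|^{2}=|Hess(f)|^{2}$, one is left with a pointwise relation among $\Delta(|\xi|^{2})$, $\langle\Delta\eta,\eta\rangle$, $|Hess(f)|^{2}$ and $S(\xi,\xi)$. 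The harmonicity hypothesis enters twice here: $\eta$ harmonic gives $\Delta\eta=0$, hence $\langle\Delta\eta,\eta\rangle=0$; and since $\eta=df$ is automatically closed, harmonicity is equivalent to $div(\xi)=0$, i.e. $f$ is a harmonic function, which by (\ref{eq2}) also forces $scal=-\lambda n-\mu|\xi|^{2}$ (the ingredient that collapses the general formula of \cite{blag} to the present one).

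Finally I would substitute the value of $S(\xi,\xi)$ found in the first step. The term $\tfrac12\,\xi(|\xi|^2)=\tfrac12\,\nabla_{grad(f)}(|\xi|^2)$ it contributes is then transferred to the side carrying $\tfrac12\,\Delta(|\xi|^2)$, where by the very definition $\Delta_{f}:=\Delta-\nabla_{grad(f)}$ it completes $\tfrac12\,\Delta(|\xi|^2)$ into $\tfrac12\,\Delta_{f}(|\xi|^2)$; what is left over is $|Hess(f)|^{2}$ together with $\lambda|\xi|^{2}+\mu|\xi|^{4}$, which is precisely (\ref{ea}). Alternatively, one invokes Theorem 3.2 of \cite{blag} directly: its Bochner-type formula for an arbitrary gradient $\eta$-Ricci soliton carries extra terms built from $div(\xi)$ and from $d(scal+\mu|\xi|^{2})$, all of which drop out once $\eta$ is assumed harmonic.

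The step I expect to be the main obstacle is not conceptual but a matter of careful sign bookkeeping: one must keep straight the conventions relating the Hodge--de Rham Laplacian acting on the $1$-form $\eta$ to the Laplacian acting on the scalar $|\xi|^{2}$, and one must verify that the cross term $Hess(f)(\xi,\xi)$ coming out of the soliton equation is exactly the quantity needed to turn $\Delta$ into the drift operator $\Delta_{f}$ --- which is what (\ref{e222}) and (\ref{hh}) guarantee.
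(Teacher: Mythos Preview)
Your proposal is correct, and in fact contains the paper's own argument as a special case. The paper gives no proof at all for this theorem: it merely introduces the statement with the phrase ``the result stated in Theorem 3.2 from \cite{blag} becomes'' and then displays (\ref{ea}). Your ``alternative'' route---invoking Theorem 3.2 of \cite{blag} and noting that the extra terms built from $div(\xi)$ drop out under the harmonicity hypothesis---is therefore literally the paper's approach. Your primary route (evaluate the soliton equation on $(\xi,\xi)$ to extract $S(\xi,\xi)$, feed it into the Bochner--Weitzenb\"{o}ck identity (\ref{b}), kill $\langle\Delta\eta,\eta\rangle$ by harmonicity, and absorb the $\tfrac12\,\xi(|\xi|^2)$ term into the drift Laplacian) is simply a self-contained unpacking of that external reference, and is the more informative of the two. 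Your closing caveat about sign bookkeeping is apt: the paper is not fully consistent in its Laplacian conventions, so matching the precise signs of (\ref{ea}) does require care.
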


Using Corollary \ref{t4} we get:
\begin{corollary}
Under the hypotheses of Theorem \ref{t11}, if $M$ is of constant scalar curvature, then at least one of $\lambda$ and $\mu$ is non positive.
\end{corollary}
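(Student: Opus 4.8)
The plan is to feed the constant-scalar-curvature hypothesis into Corollary \ref{t4} and then read off a sign obstruction from the Bochner-type identity (\ref{ea}) of Theorem \ref{t11}. First I would invoke part ii) of Corollary \ref{t4}: since $\eta=df$ is a harmonic form and $M$ has constant scalar curvature, the potential vector field $\xi=grad(f)$ is of constant length, say $|\xi|^2\equiv k^2$ for some constant $k\geq 0$. Part i) of that corollary, $\xi\in\ker Q$, is already incorporated into the hypotheses of Theorem \ref{t11}, so nothing new is needed on that side.

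Next, because $|\xi|^2$ is now a constant function, we have both $\Delta(|\xi|^2)=0$ and $\nabla_{grad(f)}(|\xi|^2)=grad(f)(|\xi|^2)=0$, hence $\Delta_f(|\xi|^2)=0$. Substituting this into (\ref{ea}) collapses the identity to the pointwise relation
\begin{equation*}
|Hess(f)|^2=-\lambda k^2-\mu k^4=-k^2(\lambda+\mu k^2).
\end{equation*}
The left-hand side is everywhere nonnegative, so as soon as $k\neq 0$ we must have $\lambda+\mu k^2\leq 0$. If both $\lambda>0$ and $\mu>0$, then $\lambda+\mu k^2>0$, a contradiction; therefore at least one of $\lambda$ and $\mu$ is non positive, which is the assertion.

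The one point deserving care is the degenerate case $k=0$, i.e.\ $\xi\equiv 0$: there $f$ is (locally) constant, $Hess(f)=0$, the identity (\ref{ea}) becomes vacuous, and the soliton equation (\ref{11}) reduces to the Einstein condition $S=-\lambda g$, which imposes no sign restriction on $\lambda$. I expect this to be the only obstacle, and it is disposed of by the standing convention that the potential vector field of the soliton is non-trivial (equivalently, one restricts to the open set where $\xi\neq 0$), on which the argument above applies verbatim. No further estimates are required: the proof amounts to substituting one consequence of Corollary \ref{t4} into Theorem \ref{t11}.
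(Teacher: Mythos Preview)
Your argument is correct and is exactly the route the paper takes: invoke Corollary~\ref{t4} ii) to get $|\xi|$ constant, plug this into (\ref{ea}) so that $\Delta_f(|\xi|^2)=0$, and read off the sign condition $\lambda+\mu|\xi|^2\le 0$ from $|Hess(f)|^2\ge 0$. Your explicit handling of the degenerate case $\xi\equiv 0$ is a welcome addition, since the paper leaves it implicit.
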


As a consequence for the case of gradient Ricci soliton, we have:
\begin{proposition}
Let $(g,\xi:=grad(f),\lambda)$ be a gradient Ricci soliton on the $n$-dimensional manifold $M$ of constant scalar curvature, with
$\eta=df$ the $g$-dual of $\xi$. If $\eta$ is a harmonic form, then the soliton is shrinking.
\end{proposition}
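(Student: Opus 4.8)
The plan is to read off the sign of $\lambda$ from the Bochner-Weitzenb\"{o}ck identity (\ref{ea}) of Theorem \ref{t11}. Since $(g,\xi,\lambda)$ is a gradient \emph{Ricci} soliton we have $\mu=0$, so the $\mu|\xi|^4$ term disappears and (\ref{ea}) reduces to $\frac{1}{2}\Delta_f(|\xi|^2)=|Hess(f)|^2+\lambda|\xi|^2$. Everything then hinges on showing that the left-hand side vanishes.

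The first step is to exploit the two standing hypotheses, ``$scal$ constant'' and ``$\eta=df$ harmonic'', through Corollary \ref{t4}(ii) (specialised to $\mu=0$): together they force the potential field $\xi=grad(f)$ to have constant length. Hence $|\xi|^2$ is a constant function, so $\Delta(|\xi|^2)=0$ and $grad(f)(|\xi|^2)=0$, whence $\Delta_f(|\xi|^2)=\Delta(|\xi|^2)-\nabla_{grad(f)}(|\xi|^2)=0$. One also records from Corollary \ref{t4}(i) that $\xi\in\ker Q$, consistently with the rest.

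Substituting into the reduced identity gives $0=|Hess(f)|^2+\lambda|\xi|^2$, i.e. $\lambda\,|\xi|^2=-|Hess(f)|^2\le 0$. Since $|\xi|^2$ is a nonnegative constant, for a nontrivial soliton ($|\xi|^2>0$ everywhere) this forces $\lambda=-|Hess(f)|^2/|\xi|^2\le 0$, and hence the soliton is shrinking.

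The step I expect to be the real obstacle is upgrading this to the \emph{strict} inequality $\lambda<0$, i.e. excluding $\lambda=0$: equality would force $Hess(f)\equiv 0$, hence $\xi$ parallel and, by the soliton equation, $S\equiv 0$, so $M$ would be Ricci-flat carrying a parallel gradient field -- the rigid/trivial soliton. To land on ``$\lambda<0$'' one needs the tacit nontriviality $\nabla\xi\not\equiv 0$; granting it, the chain above gives $\lambda<0$ and the remaining manipulations are routine once the constancy of $|\xi|^2$ has been secured.
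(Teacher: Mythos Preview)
Your argument is correct and follows essentially the same route as the paper: combine Corollary~\ref{t4} (constant $scal$ plus $\eta$ harmonic forces $|\xi|$ constant) with the $\mu=0$ case of Theorem~\ref{t11} to obtain $|Hess(f)|^2+\lambda|\xi|^2=0$, and read off the sign of $\lambda$. Your discussion of why the inequality is strict is in fact more explicit than the paper's one-line ``hence $\lambda<0$'', which tacitly assumes the soliton is nontrivial.
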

\begin{proof}
From Theorem \ref{t4} and Theorem \ref{t11} we obtain $|Hess(f)|^2+\lambda |\xi|^2=0$, hence $\lambda <0$.
\end{proof}

\begin{remark}
i) Assume that $\mu\neq 0$. If $\lambda \geq -\mu |\xi|^2$, then $\Delta_{f}(|\xi|^2)\geq 0$ and from the maximum principle follows that $|\xi|^2$ is constant in a neighborhood of any local maximum. If $|\xi|$ achieve its maximum, then $M$ is quasi-Einstein. Indeed, since $Hess(f)=0$, from the soliton equation we have $S=-\lambda g-\mu df\otimes df$. Moreover, in this case, $|\xi|^2(\lambda +\mu |\xi|^2)=0$, which implies either $\xi=0$ or $|\xi|^2=-\frac{\lambda}{\mu}\geq 0$. Since $scal+\lambda n+\mu |\xi|^2=0$ we get $scal=\lambda(1-n)$.

ii) For $\mu=0$, we get the Ricci soliton case \cite{pet2}.
\end{remark}

Computing the gradient soliton equation in $(\gamma^{\sharp},X)$, $X\in\mathfrak{X}(M)$, we obtain:
$$g(\nabla_{\gamma^{\sharp}}\xi,X)+g(Q\gamma^{\sharp},X)+\lambda g(\gamma^{\sharp},X)+\mu \eta(\gamma^{\sharp})\eta(X)=0$$
and taking $X:=\xi$ we get:
$$\frac{1}{2}\gamma^{\sharp}(|\xi|^2)+\gamma(Q\xi)+(\lambda+\mu|\xi|^2)\eta(\gamma^{\sharp})=0.$$

Therefore:
\begin{proposition}
Let $(g,\xi,\lambda,\mu)$ be an $\eta$-Ricci soliton on the $n$-dimensional manifold $M$ with $\eta$ the $g$-dual of $\xi$ and $\mu\neq 0$. If $\gamma$ is a $1$-form, then $\gamma$ is orthogonal to $\eta$ if and only if
\begin{equation}
\nabla_{\gamma^{\sharp}}\xi+Q\gamma^{\sharp}+\lambda \gamma^{\sharp}=0,
\end{equation}
hence:
\begin{equation}\label{k}
\frac{1}{2}\gamma^{\sharp}(|\xi|^2)=-\gamma(Q\xi).
\end{equation}
\end{proposition}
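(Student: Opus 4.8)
The plan is to read everything off the identity obtained just above by computing the gradient soliton equation in $(\gamma^{\sharp},X)$: for an arbitrary vector field $X$ this reads
$$g(\nabla_{\gamma^{\sharp}}\xi,X)+g(Q\gamma^{\sharp},X)+\lambda\, g(\gamma^{\sharp},X)+\mu\, \eta(\gamma^{\sharp})\,\eta(X)=0 ,$$
and since $\eta=\xi^{\flat}$ one has $\eta(\gamma^{\sharp})=g(\xi,\gamma^{\sharp})=\langle\gamma,\eta\rangle$, so that $\gamma$ is orthogonal to $\eta$ exactly when $\eta(\gamma^{\sharp})=0$.

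For the direct implication, if $\gamma$ is orthogonal to $\eta$ then $\mu\,\eta(\gamma^{\sharp})\,\eta(X)=0$, so the identity becomes $g\bigl(\nabla_{\gamma^{\sharp}}\xi+Q\gamma^{\sharp}+\lambda\gamma^{\sharp},\,X\bigr)=0$ for every $X$, and non-degeneracy of $g$ forces $\nabla_{\gamma^{\sharp}}\xi+Q\gamma^{\sharp}+\lambda\gamma^{\sharp}=0$. For the converse, substituting this vanishing back into the identity leaves $\mu\,\eta(\gamma^{\sharp})\,\eta(X)=0$ for every $X$; taking $X:=\gamma^{\sharp}$ gives $\mu\,\bigl(\eta(\gamma^{\sharp})\bigr)^{2}=0$, and since $\mu\neq 0$ we conclude $\eta(\gamma^{\sharp})=0$, i.e. $\gamma$ is orthogonal to $\eta$. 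The hypothesis $\mu\neq 0$ is exactly what powers the converse, and choosing $X=\gamma^{\sharp}$ (rather than, say, $X=\xi$) is the neatest way to isolate $\eta(\gamma^{\sharp})$.

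For the final equality (\ref{k}) I would take $X:=\xi$ in the identity, use $g(\nabla_{\gamma^{\sharp}}\xi,\xi)=\frac{1}{2}\gamma^{\sharp}(|\xi|^{2})$ and the self-adjointness $g(Q\gamma^{\sharp},\xi)=g(\gamma^{\sharp},Q\xi)=\gamma(Q\xi)$, obtaining
$$\frac{1}{2}\,\gamma^{\sharp}(|\xi|^{2})+\gamma(Q\xi)+(\lambda+\mu|\xi|^{2})\,\eta(\gamma^{\sharp})=0 ,$$
and then drop the last summand by the orthogonality just established, which yields $\frac{1}{2}\,\gamma^{\sharp}(|\xi|^{2})=-\gamma(Q\xi)$.

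I do not expect any real obstacle: the argument is just the pointwise soliton identity together with non-degeneracy of $g$. The one place that needs a little care is the converse implication of the equivalence, where a suitable test vector field and the standing assumption $\mu\neq 0$ are both needed to recover $\eta(\gamma^{\sharp})=0$.
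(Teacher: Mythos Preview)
Your argument is correct and is exactly the paper's approach: the paper derives the identity $g(\nabla_{\gamma^{\sharp}}\xi,X)+g(Q\gamma^{\sharp},X)+\lambda g(\gamma^{\sharp},X)+\mu\eta(\gamma^{\sharp})\eta(X)=0$ (which, note, uses the gradient hypothesis via (\ref{e222}) to rewrite $\frac{1}{2}(L_\xi g)(\gamma^{\sharp},X)$ as $g(\nabla_{\gamma^{\sharp}}\xi,X)$), then specializes to $X=\xi$, and simply says ``Therefore''. You have supplied the details the paper omits, including the converse direction via $X=\gamma^{\sharp}$ and $\mu\neq 0$.
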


Some results concerning the harmonic $1$-forms on gradient $\eta$-Ricci solitons are further presented.

For two $(0,2)$-tensor fields $T_1$ and $T_2$, denote by $\langle T_1,T_2\rangle:=\sum_{1\leq i,j\leq n}T_1(E_i,E_j)T_2(E_i,E_j)$, for $\{E_i\}_{1\leq i\leq n}$ a local orthonormal frame field.

\begin{theorem}\label{t3}
Let $M$ be a compact and oriented $n$-dimensional manifold $M$, $(g,\xi:=grad(f),\lambda,\mu)$ a gradient $\eta$-Ricci soliton with
$\eta=df$ the $g$-dual of $\xi$ and $\gamma$ a $1$-form.
\begin{enumerate}
\item If $\gamma$ is orthogonal to $\eta$ and $\mu\neq 0$, then $\gamma^{\sharp}\in \ker(\nabla_{\xi}\eta+\eta\circ Q)$.
  \item If $\gamma$ is harmonic, then either we have a Ricci soliton or $\nabla_{\xi}\gamma^{\sharp}\in \ker \eta$.
  \item If $\gamma$ is exact with $\gamma=du$, then:
  \begin{equation}\label{ee}
  \int_M\langle S, div(du)\rangle =-\int_M\langle Hess(f), Hess(u)\rangle -\mu ( df| \nabla_{grad(f)}grad(u)).
  \end{equation}

  Moreover, if $\gamma$ is harmonic, the relation (\ref{ee}) becomes:
\begin{equation}
  \int_M\langle Hess(f), Hess(u)\rangle =-\mu ( df| \nabla_{grad(f)}grad(u)).
  \end{equation}
\end{enumerate}
\end{theorem}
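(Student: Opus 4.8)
The three items are essentially independent, so I would treat them in turn; I expect only the second to present a real difficulty.

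\textbf{Item (1).} I would read this off the Proposition stated just before the theorem: since $\gamma$ is orthogonal to $\eta$ and $\mu\neq 0$, that Proposition gives $\nabla_{\gamma^{\sharp}}\xi+Q\gamma^{\sharp}+\lambda\gamma^{\sharp}=0$, hence the relation $(\ref{k})$, namely $\frac{1}{2}\gamma^{\sharp}(|\xi|^2)=-\gamma(Q\xi)$. Now $(\nabla_{\xi}\eta+\eta\circ Q)(\gamma^{\sharp})=(\nabla_{\xi}\eta)(\gamma^{\sharp})+\eta(Q\gamma^{\sharp})$; by $(\ref{hh})$ the first term is $\frac{1}{2}d(|\xi|^2)(\gamma^{\sharp})=\frac{1}{2}\gamma^{\sharp}(|\xi|^2)$, and self-adjointness of $Q$ gives $\eta(Q\gamma^{\sharp})=g(\xi,Q\gamma^{\sharp})=g(Q\xi,\gamma^{\sharp})=\gamma(Q\xi)$. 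Adding these and invoking $(\ref{k})$ yields $(\nabla_{\xi}\eta+\eta\circ Q)(\gamma^{\sharp})=0$, i.e. $\gamma^{\sharp}\in\ker(\nabla_{\xi}\eta+\eta\circ Q)$. (Neither compactness nor orientability is used here.)

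\textbf{Item (2).} My plan is to combine Cartan's formula with the gradient soliton equation. Since $\gamma$ is harmonic it is closed, so $L_{\xi}\gamma=d(i_{\xi}\gamma)=d(\gamma(\xi))$, while in general $L_{\xi}\gamma=\nabla_{\xi}\gamma+\gamma\circ\nabla\xi$. For a gradient soliton $(\ref{11})$ reads $\nabla_X\xi=-QX-\lambda X-\mu\eta(X)\xi$, whence $\gamma\circ\nabla\xi=-(Q\gamma^{\sharp})^{\flat}-\lambda\gamma-\mu\gamma(\xi)\eta$ and therefore
\[
\nabla_{\xi}\gamma^{\sharp}=grad(\gamma(\xi))+Q\gamma^{\sharp}+\lambda\gamma^{\sharp}+\mu\gamma(\xi)\xi .
\]
Contracting with $\xi$ and using $(Q\xi)^{\flat}=-\nabla_{\xi}\eta-(\lambda+\mu|\xi|^2)\eta$ (again $(\ref{11})$) together with $(\ref{hh})$, the claim reduces to showing $\eta(\nabla_{\xi}\gamma^{\sharp})=0$, equivalently $\xi(\gamma(\xi))=\frac{1}{2}\gamma^{\sharp}(|\xi|^2)$, whenever $\mu\neq 0$. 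To bring in the hypotheses I would integrate over the compact $M$, using $div(\gamma^{\sharp})=0$ (so $\int_M\gamma^{\sharp}(h)\,dV=0$ for every $h\in C^{\infty}(M)$) and the trace identity $(\ref{eq2})$, aiming to localize the remaining obstruction on the coefficient $\mu$. This last step is the one I expect to be the genuine obstacle: the naive manipulations make $\eta(\nabla_{\xi}\gamma^{\sharp})$ look $\mu$-free, so recovering the dichotomy should require the divergence-free half of harmonicity and the compactness of $M$, plausibly through a Bochner-type integration, and making that precise is the heart of the matter.

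\textbf{Item (3).} The relation $(\ref{ee})$ I would obtain by pairing the gradient $\eta$-Ricci soliton equation $Hess(f)+S+\lambda g+\mu\,df\otimes df=0$ with the symmetric $(0,2)$-tensor $\nabla(du)=Hess(u)$ (symmetric because $\gamma=du$ is closed, so that $\langle S,\nabla(du)\rangle$ is exactly the integrand of the left-hand side of $(\ref{ee})$) and integrating over $M$. The $\lambda$-term, $\lambda\langle g,Hess(u)\rangle=\lambda\,div(grad(u))$, integrates to zero on the compact oriented $M$, and the $\mu$-term, $\mu\langle df\otimes df,Hess(u)\rangle=\mu\,Hess(u)(grad(f),grad(f))=\mu\,g(grad(f),\nabla_{grad(f)}grad(u))$, integrates to $\mu\,( df| \nabla_{grad(f)}grad(u))$; rearranging yields $(\ref{ee})$. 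For the ``moreover'' part, if $\gamma=du$ is harmonic then $div(grad(u))=0$, and using the contracted Bianchi identity $2\,div(S)=d(scal)$ and integrating by parts twice,
\[
\int_M\langle S,Hess(u)\rangle\,dV=-\frac{1}{2}\int_M\langle d(scal),du\rangle\,dV=\frac{1}{2}\int_M scal\,div(grad(u))\,dV=0 ,
\]
so $(\ref{ee})$ collapses to the stated identity; the only care needed here is keeping these integrations by parts honest on the compact oriented manifold, which is routine.
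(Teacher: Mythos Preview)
Your treatment of Item~1 coincides with the paper's: both start from the relation $(\ref{k})$ coming from the preceding Proposition and rewrite $g(\nabla_{\gamma^\sharp}\xi,\xi)+g(Q\xi,\gamma^\sharp)$ as $(\nabla_\xi\eta)(\gamma^\sharp)+\eta(Q\gamma^\sharp)$.

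For Item~3 the first identity is obtained in the paper exactly as you do, by pairing the soliton equation with the symmetric tensor $\nabla(du)=Hess(u)$ and integrating. Your ``moreover'' argument, however, is genuinely different and in fact sharper: the paper does not isolate why $\int_M\langle S,Hess(u)\rangle$ vanishes for harmonic $u$, whereas you supply this via the contracted Bianchi identity and two integrations by parts. That step is correct and fills a point the paper leaves implicit.

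Item~2 is where the two approaches diverge substantially. The paper does \emph{not} use Cartan's formula at all. Instead it works pointwise: writing $div(\gamma)$ for the $(0,2)$-tensor $\nabla\gamma$, it pairs the soliton equation with this tensor to get
\[
\langle S,div(\gamma)\rangle=-\tfrac{1}{2}\langle Hess(f),L_{\gamma^{\sharp}}g\rangle-\lambda\,div(\gamma^{\sharp})-\mu\,g(\nabla_{\xi}\gamma^{\sharp},\xi),
\]
and then declares that for $\gamma$ harmonic one may ``use $div(\gamma)=0=div(\gamma^\sharp)$'' to conclude $\mu\,\eta(\nabla_\xi\gamma^\sharp)=0$. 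But $div(\gamma)=\nabla\gamma$ vanishes only for \emph{parallel} $\gamma$, not for harmonic $\gamma$; and if one instead tries to use only $div(\gamma^\sharp)=0$, the remaining terms $\langle S,\nabla\gamma\rangle+\langle Hess(f),\nabla\gamma\rangle=\langle-\lambda g-\mu\eta\otimes\eta,\nabla\gamma\rangle=-\mu\,\eta(\nabla_\xi\gamma^\sharp)$, and the identity becomes tautological. So the gap you flag in your own approach is real, and the paper's pointwise argument does not close it either: neither compactness nor the Bochner-type integration you were hoping to invoke actually enters the paper's proof. Your honest identification of this as ``the heart of the matter'' is well placed.
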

\begin{proof}
From (\ref{k}) and using (\ref{e222}) we get:
$$0=g(\nabla_{\gamma^{\sharp}}\xi,\xi)+g(Q\xi,\gamma^{\sharp})=\xi(\eta(\gamma^{\sharp}))-\eta(\nabla_{\xi}\gamma^{\sharp})+g(\xi,Q\gamma^{\sharp})=
(\nabla_{\xi}\eta)\gamma^{\sharp}+\eta(Q\gamma^{\sharp})$$
and hence 1.

Let $\{E_i\}_{1\leq i\leq n}$ be a local orthonormal frame field with $\nabla_{E_i}E_j=0$ in a point. For any symmetric $(0,2)$-tensor field $Z$ and any $1$-form $\gamma$:
$$\langle Z,L_{\gamma^{\sharp}}g\rangle=\sum_{1\leq i,j\leq n}Z(E_i,E_j)(L_{\gamma^{\sharp}}g)(E_i,E_j)=2\sum_{1\leq i,j\leq n}Z(E_i,E_j)g(\nabla_{E_i}\gamma^{\sharp},E_j)=$$$$=2\sum_{1\leq i,j\leq n}Z(E_i,E_j)E_i(\gamma(E_j))=2\langle Z,div(\gamma)\rangle.$$

Also:
$$\langle g,L_{\gamma^{\sharp}}g\rangle=\sum_{i=1}^n(L_{\gamma^{\sharp}}g)(E_i,E_i)=2\sum_{i=1}^ng(\nabla_{E_i}\gamma^{\sharp},E_i)=2div(\gamma^{\sharp})$$
and
$$\langle df\otimes df,L_{\gamma^{\sharp}}g\rangle=
\sum_{1\leq i,j\leq n}df(E_i)df(E_j)(L_{\gamma^{\sharp}}g)(E_i,E_j)=2\sum_{1\leq i,j\leq n}df(E_i)df(E_j)g(\nabla_{E_i}\gamma^{\sharp},E_j)=$$$$=2g(\nabla_{grad(f)}\gamma^{\sharp},grad(f))=2g((\nabla_{grad(f)}\gamma)^{\sharp},(df)^{\sharp}).$$

Computing $\langle S,div(\gamma)\rangle$ by replacing $S$ from the $\eta$-Ricci soliton equation, we obtain:
$$\langle S,div(\gamma)\rangle=-\frac{1}{2}\langle Hess(f),L_{\gamma^{\sharp}}g\rangle-\lambda div(\gamma^{\sharp})-\mu g((\nabla_{grad(f)}\gamma)^{\sharp},(df)^{\sharp}).$$

For 2. we use $div(\gamma)=0=div(\gamma^{\sharp})$ and for 3. we use the fact that $\gamma^{\sharp}=grad(u)$, hence $L_{\gamma^{\sharp}}g=2Hess(u)$ and apply the divergence theorem.
\end{proof}

Since
$$\eta(\nabla_{\xi}\xi)=\frac{1}{2}\xi(|\xi|^2)$$
and for $\eta$ harmonic:
$$\int_M|Hess(f)|^2=-\mu \int_M df(\nabla_{\xi}\xi),$$
we get:

\begin{corollary}
Under the hypotheses of Theorem \ref{t3}, if $\eta$ is a harmonic form, then either we have a Ricci soliton or the potential vector field $\xi$ is of constant length. In the second case, $\eta$ is a solution of the Schr\"{o}dinger-Ricci equation and $M$ is a quasi-Einstein manifold.
\end{corollary}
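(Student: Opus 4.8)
The plan is to feed the harmonic, exact $1$-form $\eta=df$ back into the third part of Theorem~\ref{t3} with the choice $u=f$, then use that a harmonic $df$ is divergence-free to kill the resulting integral; the remaining assertions then drop out of the soliton equation and Theorem~\ref{t1}.

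First I would specialise the third part of Theorem~\ref{t3} to $u:=f$, which is legitimate since $\gamma=df$ is exact and, by hypothesis, harmonic. Its ``moreover'' clause then reads
\[
\int_M|Hess(f)|^2=-\mu\int_M df(\nabla_{grad(f)}grad(f))=-\mu\int_M\eta(\nabla_{\xi}\xi).
\]
Substituting $\eta(\nabla_{\xi}\xi)=g(\xi,\nabla_{\xi}\xi)=\frac{1}{2}\xi(|\xi|^2)$ turns this into $\int_M|Hess(f)|^2=-\frac{\mu}{2}\int_M\xi(|\xi|^2)$. Now $\eta=df$ harmonic is, in particular, co-closed, i.e.\ $div(\xi)=0$; integrating the pointwise identity $div(|\xi|^2\xi)=\xi(|\xi|^2)+|\xi|^2 div(\xi)=\xi(|\xi|^2)$ over the compact oriented manifold $M$ therefore gives $\int_M\xi(|\xi|^2)=0$. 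Consequently $\int_M|Hess(f)|^2=0$, so $Hess(f)\equiv 0$; thus either $\mu=0$, which is exactly the Ricci soliton case, or $\mu\neq 0$ and $\xi=grad(f)$ is parallel, in particular of constant length.

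It remains to treat the second alternative. From $Hess(f)=0$ we get $L_{\xi}g=2 Hess(f)=0$, so the $\eta$-Ricci soliton equation (\ref{11}) reduces to $S=-\lambda g-\mu\eta\otimes\eta$, which is precisely the quasi-Einstein condition. Tracing this (equivalently, using (\ref{eq2}) with $div(\xi)=0$) gives $scal+\lambda n+\mu|\xi|^2=0$, a constant, whence $d(scal)=0$; moreover $\nabla_{\xi}\eta=\frac{1}{2}d(|\xi|^2)=0$ since $|\xi|$ is constant. Inserting these two facts into the criterion (\ref{e211}) of Theorem~\ref{t1} makes both of its sides vanish, so $\eta$ is a solution of the Schr\"{o}dinger-Ricci equation. (Alternatively one may combine $\xi\in\ker Q$, from Corollary~\ref{t4}~i), with the part of the relation lemma characterising when a harmonic form solves the Schr\"{o}dinger-Ricci equation.)

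I do not expect a genuine obstacle, as all the required machinery is already in place in Theorems~\ref{t1} and \ref{t3} and Corollary~\ref{t4}; the one point deserving attention is the observation that it is precisely the harmonicity of $\eta$, through $div(\xi)=0$, that forces $\int_M\xi(|\xi|^2)=0$ and hence $Hess(f)=0$. Beyond that, only the correct specialisation $u=f$ in Theorem~\ref{t3} and routine sign-bookkeeping with the divergence theorem on a compact oriented manifold are needed.
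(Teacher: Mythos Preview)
Your argument is correct and follows the same route the paper takes: specialise the ``moreover'' clause of Theorem~\ref{t3}(3) to $u=f$ to obtain $\int_M|Hess(f)|^2=-\mu\int_M\eta(\nabla_\xi\xi)=-\tfrac{\mu}{2}\int_M\xi(|\xi|^2)$, and then read off the quasi-Einstein and Schr\"{o}dinger-Ricci conclusions from the soliton equation once $Hess(f)=0$. The paper records only these two ingredients before stating the corollary; your additional step---using $div(\xi)=0$ and the divergence theorem to force $\int_M\xi(|\xi|^2)=0$, hence $Hess(f)\equiv 0$ regardless of $\mu$---makes explicit what the paper leaves to the reader (one could also reach the same vanishing via Theorem~\ref{t3}(2), which for $\gamma=\eta$ and $\mu\neq 0$ gives $\eta(\nabla_\xi\xi)=0$ pointwise).
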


We know that the Bochner formula, for an arbitrary vector field $\xi$ \cite{blag}, states:
$$\frac{1}{2}\Delta(|\xi|^2)=|\nabla \xi|^2+S(\xi,\xi)+\xi(div(\xi))$$
and taking into account that the $g$-dual $1$-form $\eta$ of $\xi$ satisfies
$$|\xi|=|\eta|, \ \ |\nabla \xi|=|\nabla \eta|, \ \ S(\xi,\xi)=S^{\sharp}(\eta,\eta), \ \ \xi(div(\xi))=\langle \Delta(\eta),\eta\rangle,$$
we have the corresponding relation for $\eta$:
\begin{equation}\label{b}
\frac{1}{2}\Delta(|\eta|^2)=|\nabla \eta|^2+S^{\sharp}(\eta,\eta)+\langle \Delta(\eta),\eta\rangle.
\end{equation}

Let $\gamma$ be a $1$-form and writing the previous relation for $\eta+\gamma$ we obtain:
$$\frac{1}{2}\Delta(\langle \eta,\gamma\rangle)=\langle \nabla \eta, \nabla \gamma\rangle+S^{\sharp}(\eta,\gamma)+\frac{1}{2}(\langle \Delta (\eta),\gamma\rangle+\langle \Delta (\gamma),\eta\rangle).$$

\begin{theorem}
Let $M$ be an $n$-dimensional manifold, $(g,\xi:=grad(f),\lambda,\mu)$ a gradient $\eta$-Ricci soliton with
$\eta=df$ the $g$-dual of $\xi$ and $\gamma$ a $1$-form. Then:
\begin{equation}\label{gh}
\frac{1}{2}\Delta(\langle df,\gamma\rangle)=\langle Hess(f), \nabla \gamma\rangle-\mu \Delta(f)\langle df,\gamma \rangle+\frac{1}{2}\langle df,\Delta (\gamma) \rangle.
\end{equation}
\begin{proof}
From (\ref{eq2}), (\ref{rr}), (\ref{eee}) and $2 div(S)=d(scal)$, we get:
$$S^{\sharp}(\eta,\gamma)=S(\xi,\gamma^{\sharp})=-\frac{1}{2}d(\Delta(f))(\gamma^{\sharp})-\mu \Delta(f) df(\gamma^{\sharp})=
-\frac{1}{2}\langle \Delta(df), \gamma\rangle-\mu \Delta(f) \langle df,\gamma\rangle,$$
hence (\ref{gh}).
\end{proof}
\end{theorem}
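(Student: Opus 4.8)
The plan is to read off (\ref{gh}) from the polarized Bochner identity
$$\frac{1}{2}\Delta(\langle \eta,\gamma\rangle)=\langle \nabla \eta, \nabla \gamma\rangle+S^{\sharp}(\eta,\gamma)+\frac{1}{2}(\langle \Delta (\eta),\gamma\rangle+\langle \Delta (\gamma),\eta\rangle)$$
established just above, specialized to $\eta=df$. First I would substitute $\eta=df$: since for $\xi=grad(f)$ one has $\nabla(df)=Hess(f)$ as a symmetric $(0,2)$-tensor, the term $\langle\nabla\eta,\nabla\gamma\rangle$ becomes $\langle Hess(f),\nabla\gamma\rangle$, which is already the first summand in (\ref{gh}), while $\langle\Delta(\gamma),\eta\rangle=\langle df,\Delta(\gamma)\rangle$ furnishes the last one. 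Hence everything reduces to showing
$$S^{\sharp}(\eta,\gamma)+\frac{1}{2}\langle\Delta(\eta),\gamma\rangle=-\mu\Delta(f)\langle df,\gamma\rangle.$$

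The crux is the gradient $\eta$-Ricci soliton identity
$$S(\xi,\cdot)=i_{Q\xi}g=-\frac{1}{2}d(\Delta(f))-\mu\Delta(f)\,df .$$
To get it I would take the divergence of the soliton equation (\ref{11}) written for $\xi=grad(f)$, so that $L_{\xi}g=2Hess(f)$ and $div(Hess(f))=\frac{1}{2}div(L_{\xi}g)$ is given by (\ref{eee}), replace $div(\eta\otimes\eta)$ using (\ref{rr}), invoke the contracted second Bianchi identity $2div(S)=d(scal)$, and finally eliminate the scalar curvature by the trace relation (\ref{eq2}), $scal=-div(\xi)-\lambda n-\mu|\xi|^2$. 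Collecting the $d(div(\xi))$ and $d(|\xi|^2)$ contributions, what survives is exactly $S(\xi,\cdot)=-\frac{1}{2}d(div(\xi))-\mu\,div(\xi)\,df$; since $\Delta(f)=div(grad(f))=div(\xi)$ and $\Delta$ commutes with $d$, so that $\Delta(df)=d(\Delta(f))$, this is the asserted identity.

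Evaluating the soliton identity on $\gamma^{\sharp}$ then gives $S^{\sharp}(\eta,\gamma)=S(\xi,\gamma^{\sharp})=-\frac{1}{2}\langle\Delta(df),\gamma\rangle-\mu\Delta(f)\langle df,\gamma\rangle$; adding $\frac{1}{2}\langle\Delta(\eta),\gamma\rangle=\frac{1}{2}\langle\Delta(df),\gamma\rangle$ cancels the $\Delta(df)$-terms and leaves precisely $-\mu\Delta(f)\langle df,\gamma\rangle$, which together with the two summands already identified is (\ref{gh}).

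The step I expect to require the most care is pinning down the signs in the soliton identity: one has to check that the Laplacian conventions entering (\ref{e61}), (\ref{eee}) and the Bochner formula (\ref{b}) are mutually consistent, so that the curvature produced when commuting two covariant derivatives cancels $S^{\sharp}$ exactly and the residue is $d(\Delta(f))$ and nothing else. Once the identity $S(\xi,\cdot)=-\frac{1}{2}d(\Delta(f))-\mu\Delta(f)\,df$ is secured, the passage to (\ref{gh}) is a single substitution followed by cancellation.
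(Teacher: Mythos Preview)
Your proposal is correct and follows essentially the same route as the paper: you invoke the polarized Bochner identity, then derive $S(\xi,\cdot)=-\frac{1}{2}d(\Delta(f))-\mu\Delta(f)\,df$ from (\ref{eq2}), (\ref{rr}), (\ref{eee}) and the contracted Bianchi identity, and substitute back; this is exactly the computation the paper indicates, only spelled out in more detail. Your caveat about tracking the Laplacian sign conventions is well placed but does not reflect a gap in the argument.
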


\begin{proposition}
Let $M$ be an $n$-dimensional manifold, $(g,\xi:=grad(f),\lambda,\mu)$ a gradient $\eta$-Ricci soliton with
$\eta=df$ the $g$-dual of $\xi$ and $\gamma$ a $1$-form.
\begin{enumerate}
\item If $\gamma$ is orthogonal to $\eta$, then $\langle Hess(f), \nabla \gamma\rangle=-\frac{1}{2}\langle df,\Delta (\gamma) \rangle$.
  \item If $\gamma$ is harmonic, then $\frac{1}{2}\Delta(\langle df,\gamma\rangle)=\langle Hess(f), \nabla \gamma\rangle-\mu \Delta(f)\langle df,\gamma \rangle$. In this case, $\langle df,\gamma\rangle$ is harmonic if and only if $\mu \Delta(f)\langle df,\gamma \rangle=\langle Hess(f), \nabla \gamma\rangle$.

  Moreover, if $\gamma$ is orthogonal to $\eta$, then $\nabla \gamma$ is orthogonal to $\nabla \eta$.
\end{enumerate}
\end{proposition}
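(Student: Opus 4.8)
The plan is to derive everything directly from the master identity (\ref{gh}) proved immediately above, simply by substituting the two special hypotheses.

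For item 1, I would recall that orthogonality of the $1$-forms $\gamma$ and $\eta$ means $\langle df,\gamma\rangle=\langle\eta,\gamma\rangle$ vanishes identically as a function on $M$. Consequently the left-hand side $\frac{1}{2}\Delta(\langle df,\gamma\rangle)$ of (\ref{gh}) is zero, and so is the term $\mu\Delta(f)\langle df,\gamma\rangle$; what remains is $0=\langle Hess(f),\nabla\gamma\rangle+\frac{1}{2}\langle df,\Delta(\gamma)\rangle$, which is precisely the asserted identity.

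For item 2, harmonicity of $\gamma$ gives $\Delta(\gamma)=0$, so the last term of (\ref{gh}) drops out and (\ref{gh}) reduces to $\frac{1}{2}\Delta(\langle df,\gamma\rangle)=\langle Hess(f),\nabla\gamma\rangle-\mu\Delta(f)\langle df,\gamma\rangle$. Then $\langle df,\gamma\rangle$ is harmonic exactly when its Laplacian vanishes, i.e. when $\mu\Delta(f)\langle df,\gamma\rangle=\langle Hess(f),\nabla\gamma\rangle$, giving the stated characterization. For the final assertion I would combine the two hypotheses: with $\gamma$ both harmonic and orthogonal to $\eta$, item 1 applied with $\Delta(\gamma)=0$ forces $\langle Hess(f),\nabla\gamma\rangle=0$; since $\eta=df$ yields $\nabla\eta=Hess(f)$, this says precisely that the $(0,2)$-tensor fields $\nabla\gamma$ and $\nabla\eta$ are orthogonal with respect to the inner product $\langle\cdot,\cdot\rangle$ on $(0,2)$-tensors.

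There is no genuine obstacle here; the statement is a direct corollary of (\ref{gh}). The only points needing a moment of care are that orthogonality of $1$-forms is the pointwise vanishing of $\langle\cdot,\cdot\rangle$ (so that $\Delta$ annihilates it), and the identification $\nabla\eta=Hess(f)$, which lets one rephrase $\langle Hess(f),\nabla\gamma\rangle=0$ as orthogonality of $\nabla\gamma$ and $\nabla\eta$.
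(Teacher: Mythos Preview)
Your proposal is correct and follows exactly the intended route: the paper states this proposition immediately after (\ref{gh}) with no separate proof, precisely because each item is obtained by the direct substitutions you describe. Your remarks on the pointwise meaning of orthogonality and the identification $\nabla\eta=Hess(f)$ are the only things that need saying, and you have said them.
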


\subsubsection*{\textbf{$L^2_f$ harmonic $1$-forms.}}

Endow the Riemannian manifold $(M,g)$ with the weighted volume form $e^{-f}dV$ and define \textit{$L^2_f$ forms} those forms $\gamma$ satisfying $\int_M |\gamma|^2e^{-f}dV < \infty$.

The most natural operator of Laplacian-type associated to the weighted manifold $(M,g,e^{-f}dV)$ is the $f$-Laplace-Hodge operator
$$\Delta_f:=\Delta-\nabla_{grad(f)}$$
which is self-adjoint with respect to this measure.

We say that a $1$-form $\gamma$ is \textit{$f$-harmonic} if
$$\Delta_f(\gamma)=0.$$

Remark that $\gamma$ is $f$-harmonic if and only if
$$\Delta(\gamma)=i_{\nabla_{\gamma}^{\sharp}\xi}g.$$

From (\ref{eq2}) and (\ref{hh}) we deduce:
\begin{proposition}
Let $(g,\xi:=grad(f),\lambda,\mu)$ be a gradient $\eta$-Ricci soliton on the $n$-dimensional manifold $M$ with
$\eta=df$ the $g$-dual of $\xi$. Then $\eta$ is an $f$-harmonic form if and only if $scal+(\mu+\frac{1}{2})|\xi|^2$ is constant.
\end{proposition}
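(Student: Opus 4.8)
The plan is to unwind the definition of $f$-harmonicity for the exact form $\eta=df$ and reduce it, via the trace identity (\ref{eq2}), to a condition on $d(scal)$. Since $\Delta_f=\Delta-\nabla_{grad(f)}$ and $\nabla_{grad(f)}\eta=\nabla_{\xi}\eta$, the form $\eta=df$ is $f$-harmonic precisely when $\Delta(df)=\nabla_{\xi}\eta$. First I would rewrite the right-hand side by (\ref{hh}), which gives $\nabla_{\xi}\eta=\tfrac12 d(|\xi|^2)$, so that the $f$-harmonicity condition reads $\Delta(df)=\tfrac12 d(|\xi|^2)$. (Equivalently, one can use the displayed remark characterizing $f$-harmonic forms: for $\gamma=\eta$ we have $\gamma^{\sharp}=\xi$, hence $i_{\nabla_{\gamma^{\sharp}}\xi}g=(\nabla_{\xi}\xi)^{\flat}=\nabla_{\xi}\eta$, recovering the same equation.)

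Next I would treat the left-hand side. Because the Laplace--Hodge operator commutes with $d$, one has $\Delta(df)=d(\Delta f)$, and with the sign normalization of $\Delta$ used throughout the paper (the one under which (\ref{b}) holds and $2\,div(S)=d(scal)$) one has $\Delta f=div(grad(f))=div(\xi)$, so $\Delta(df)=d(div(\xi))$. Now I invoke the trace of the soliton equation (\ref{eq2}), $div(\xi)=-(scal+\lambda n+\mu|\xi|^2)$; differentiating, and using that $\lambda n$ is constant, gives $d(div(\xi))=-d(scal)-\mu\,d(|\xi|^2)$.

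Combining the two sides, $\eta$ is $f$-harmonic if and only if $-d(scal)-\mu\,d(|\xi|^2)=\tfrac12 d(|\xi|^2)$, i.e.\ $d\big(scal+(\mu+\tfrac12)|\xi|^2\big)=0$. On a connected $M$ this is equivalent to $scal+(\mu+\tfrac12)|\xi|^2$ being constant, which is the assertion; otherwise the conclusion holds locally on each component.

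The only genuinely delicate point is the identity $\Delta(df)=d(div(\xi))$: it hinges on fixing the sign convention for $\Delta$ on functions so that $\Delta f=div(grad(f))$, consistently with how $\Delta$ is normalized elsewhere in the paper (e.g.\ in (\ref{b}) and in the Bianchi identity $2\,div(S)=d(scal)$). Once that is pinned down, the proof is a one-line substitution of (\ref{eq2}) and (\ref{hh}); commuting $d$ past $\Delta$, discarding the constant $\lambda n$, and passing from "$d(\cdot)=0$" to "$\cdot$ is constant" on a connected manifold are all routine.
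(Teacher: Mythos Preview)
Your argument is correct and follows exactly the route the paper indicates (it merely cites (\ref{eq2}) and (\ref{hh}) before stating the proposition): you unwind $\Delta_f(\eta)=0$ to $\Delta(df)=\nabla_{\xi}\eta$, replace the right side via (\ref{hh}) and the left via $\Delta(df)=d(div(\xi))$ together with (\ref{eq2}). Your care about the sign convention for $\Delta$ on functions is well placed and consistent with the paper's usage (e.g.\ the identification $\xi(div(\xi))=\langle\Delta(\eta),\eta\rangle$ preceding (\ref{b})).
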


In terms of $\Delta_f$, the relation (\ref{b}) can be written \cite{lot}:
\begin{equation}\label{pr}
\frac{1}{2}\Delta_f(|\gamma|^2)=|\nabla \gamma|^2+S_f^{\sharp}(\gamma,\gamma)+\langle \Delta_f(\gamma),\gamma\rangle,
\end{equation}
where $S_f:=Hess(f)+S$ is the Bakry-\'{E}mery Ricci tensor.

\bigskip

Using a Reilly-type formula involving the $f$-Laplacian, an interesting result was obtained in \cite{deng}, namely, if the manifold $M$ is the boundary of a compact and connected Riemannian manifold and has non negative $m$-dimensional Bakry-\'{E}mery Ricci curvature and non negative $f$-mean curvature, then either $M$ is connected or it has only two connected components, in the later case, being totally geodesic.

Another interesting topological property will be stated in the next theorem:

\begin{theorem}\label{m}
Let $(M^n, g, e^{-f}dV)$ be a complete, non compact smooth metric measure space and $(g,\xi:=grad(f),\lambda,\mu)$ a gradient $\eta$-Ricci soliton with
$\eta=df$ the $g$-dual of $\xi$. If there exists a non trivial $L^2_f$ harmonic $1$-form $\gamma_0$ such that $\lambda |\gamma_0|^2+\mu (\gamma_0(\xi))^2\leq 0$, then $M$ has finite volume and its universal covering splits isometrically into $\mathbb{R}\times N^{n-1}$.
\end{theorem}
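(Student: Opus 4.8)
The plan is to feed the weighted Bochner identity (\ref{pr}) with information extracted from the soliton equation. Since $\xi=grad(f)$ forces $L_{\xi}g=2\,Hess(f)$, equation (\ref{11}) rearranges to $Hess(f)+S=-\lambda g-\mu\,df\otimes df$, so the Bakry--\'{E}mery tensor of the triple is
\[
S_f=Hess(f)+S=-\lambda g-\mu\,df\otimes df,\qquad\text{hence}\qquad S_f^{\sharp}(\gamma_0,\gamma_0)=-\bigl(\lambda|\gamma_0|^2+\mu(\gamma_0(\xi))^2\bigr)\ge 0
\]
by hypothesis. Inserting this together with $\Delta_f(\gamma_0)=0$ into (\ref{pr}) gives
\[
\tfrac12\,\Delta_f(|\gamma_0|^2)=|\nabla\gamma_0|^2+S_f^{\sharp}(\gamma_0,\gamma_0)\ge|\nabla\gamma_0|^2\ge 0 .
\]
Thus $|\gamma_0|^2$ is a nonnegative $f$-subharmonic function; combining the identity $\tfrac12\Delta_f(|\gamma_0|^2)=|\gamma_0|\,\Delta_f|\gamma_0|+|\nabla|\gamma_0||^2$ on $\{|\gamma_0|>0\}$ with Kato's inequality $|\nabla|\gamma_0||\le|\nabla\gamma_0|$ shows that $|\gamma_0|$ itself is (weakly) $f$-subharmonic on $M$.

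Next I would invoke an $L^p_f$--Liouville theorem: on a complete smooth metric measure space a nonnegative $f$-subharmonic function lying in $L^p_f$ for some $p>1$ must be constant (a weighted Caccioppoli estimate against cut-off functions, requiring no curvature bound). Since $\gamma_0\in L^2_f$ we have $|\gamma_0|\in L^2_f$, hence $|\gamma_0|\equiv c$ is constant, and nontriviality of $\gamma_0$ forces $c>0$. Reinserting $|\gamma_0|^2\equiv c^2$ into the Bochner inequality above yields $0\ge|\nabla\gamma_0|^2\ge0$, so $\nabla\gamma_0=0$: the form $\gamma_0$ is parallel. Finiteness of the weighted volume is then immediate, since $\int_M e^{-f}\,dV=c^{-2}\int_M|\gamma_0|^2e^{-f}\,dV<\infty$.

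For the splitting I would pass to the universal covering $\widetilde M$ (complete and simply connected) and lift $\gamma_0$; parallelism is local and survives the lift. A nonzero parallel $1$-form on $\widetilde M$ is closed, hence exact, say $\gamma_0=c\,dt$; then $Hess(t)=\tfrac1c\nabla\gamma_0=0$, so $t\colon\widetilde M\to\mathbb{R}$ has unit gradient and no critical points, and $grad(t)$ is a parallel, in particular Killing, vector field. The de Rham decomposition theorem --- equivalently, integrating the flow of $grad(t)$ --- then produces an isometry $\widetilde M\cong(\mathbb{R}\times N^{n-1},\,dt^2\oplus g_N)$ with $N=t^{-1}(0)$.

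The essential difficulty is the analytic step in the second paragraph: showing that a nonnegative $L^2_f$ function which is only \emph{weakly} $f$-subharmonic --- one must control the behaviour across the zero set of $\gamma_0$, where Kato's inequality intervenes --- is necessarily constant on a \emph{non-compact} complete weighted manifold, \emph{without} any lower bound on $S_f$. This rests on a careful weighted integration by parts (a Gaffney-type density statement for $L^2_f$ forms) together with the cut-off estimate; the remaining steps are formal manipulations of (\ref{pr}) and (\ref{11}) and a standard application of the de Rham theorem.
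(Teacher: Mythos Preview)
Your argument is correct and follows essentially the same route as the paper: translate the hypothesis into $S_f^{\sharp}(\gamma_0,\gamma_0)\ge 0$ via the soliton equation, feed this into the weighted Bochner identity (\ref{pr}) to obtain $|\gamma_0|\,\Delta_f|\gamma_0|\ge 0$, and then invoke the $L^2_f$--Liouville/splitting machinery. The paper simply cites Lemma~3.2 of \cite{vi} for the Kato step and then defers all remaining analysis (``following the same steps as in \cite{vi}'') to that reference, whereas you have unpacked those steps---Kato's inequality, the weighted Yau-type $L^2_f$ Liouville theorem, parallelism of $\gamma_0$, the weighted-volume consequence, and the de~Rham splitting on the universal cover---explicitly.
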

\begin{proof}
The condition $\lambda |\gamma_0|^2+\mu (\gamma_0(\xi))^2\leq 0$ is equivalent to $S_f^{\sharp}(\gamma_0,\gamma_0)\geq 0$. From (\ref{pr}) and Lemma 3.2 from \cite{vi}:
$$|\gamma_0|\Delta_f(|\gamma_0|)\geq 0.$$

Following the same steps as in \cite{vi}, we obtain the conclusion.
\end{proof}

\begin{remark}
i) Under the hypothesis of Theorem \ref{m}, in particular, we deduce that $\gamma_0$ is $\nabla$-parallel and of constant length. Also, $\lambda\leq 0$ since in \cite{qi} was shown that $\lambda >0$ implies $M$ compact.

ii) In the Ricci soliton case, the hypothesis of Theorem \ref{m} requires that the space of $L^2_f$ harmonic $1$-forms to be nonempty and the Ricci soliton to be shrinking in order to get the same conclusion.
\end{remark}

\small{

\bigskip

\bigskip

\bigskip

\bigskip

\textit{Adara M. Blaga}

\textit{Department of Mathematics}

\textit{West University of Timi\c{s}oara}

\textit{Bld. V. P\^{a}rvan nr. 4, 300223, Timi\c{s}oara, Rom\^{a}nia}

\textit{adarablaga@yahoo.com}
}


\begin{thebibliography}{99}

\bibitem{b12}Blaga, A. M.: \emph{A note on almost $\eta$-Ricci solitons in Euclidean hypersurfaces}. Serdica Math. J. \textbf{43}(3-4), 361-368 (2017).

\bibitem{b4}Blaga, A. M.: \emph{A note on warped product almost quasi-Yamabe solitons}. Filomat \textbf{33}(7), 2009-2016 (2019).

\bibitem{b}Blaga, A. M.: \emph{Almost $\eta$-Ricci solitons in $(LCS)_n$-manifolds}. Bull. Belg. Math. Soc. Simon Stevin \textbf{25}(5), 641-653 (2018).

\bibitem{bla}Blaga, A. M.: \emph{$\eta$-Ricci solitons on Lorentzian para-Sasakian manifolds}. Filomat \textbf{30}(2), 489-496 (2016).

\bibitem{bl}Blaga, A. M.: \emph{$\eta$-Ricci solitons on para-Kenmotsu manifolds}. Balkan J. Geom. Appl. \textbf{20}(1), 1-13 (2015).

\bibitem{b11}Blaga, A. M.: \emph{Last multipliers on $\eta$-Ricci solitons}. Matematichki Bilten \textbf{42}(2), 85-90 (2018).

\bibitem{blaga}Blaga, A. M.: \emph{On gradient $\eta$-Einstein solitons}. Kragujevak J. Math. \textbf{42}(2), 229-237 (2018).

\bibitem{ada}Blaga, A. M.: \emph{On harmonicity and Miao-Tam critical metrics in a perfect fluid spacetime}. Bol. Soc. Mat. Mexicana, https://doi.org/10.1007/s40590-020-00281-4 (2020).

\bibitem{b3}Blaga, A. M.: \emph{On solitons in statistical geometry}. Int. J. Appl. Math. Stat. \textbf{58}(4), (2019).

\bibitem{blag}Blaga, A. M.: \emph{On warped product gradient $\eta$-Ricci solitons}. Filomat \textbf{31}(18), 5791-5801 (2017).

\bibitem{b7}Blaga, A. M.: \emph{Remarks on almost $\eta$-solitons}. Matematicki Vesnik \textbf{71}(3), 244-249 (2019).

\bibitem{blagam}Blaga, A. M.: \emph{Solitons and geometrical structures in a perfect fluid spacetime}. Rocky Mountain J. Math. \textbf{50}(1), 41-53 (2020).

\bibitem{b1}Blaga, A. M.: \emph{Solutions of some types of soliton equations in $\mathbb{R}^3$}. Filomat \textbf{33}(4), 1159-1162 (2019).

\bibitem{b5}Blaga, A. M.: \emph{Some geometrical aspects of Einstein, Ricci and Yamabe solitons}. J. Geom. Sym. Phys. \textbf{52}, 17-26 (2019).

\bibitem{b6}{Blaga, A. M., Baishya, K. K., Sarkar, N.}: \emph{Ricci solitons in a generalized weakly (Ricci) symmetric $D$-homothetically deformed Kenmotsu manifold}. Ann. Univ. Paedagog. Crac. Stud. Math. \textbf{18}, 123-136 (2019).

\bibitem{b2}{Blaga, A. M., Crasmareanu, M. C.}: \emph{Inequalities for gradient Einstein and Ricci solitons}. Facta Univ. Math. Inform. \textbf{35}(2), 351-356 (2020).

\bibitem{blcr}{Blaga, A. M., Crasmareanu, M. C.}: \emph{Torse-forming $\eta$-Ricci solitons in almost paracontact $\eta$-Einstein geometry}. Filomat \textbf{31}(2), 499-504 (2017).

\bibitem{b8}{Blaga, A. M., Perkta\c s, S. Y.}: \emph{Remarks on almost $\eta$-Ricci solitons in $\varepsilon$-para Sasakian manifolds}. Commun. Fac. Sci. Univ. Ank. Ser. A1 Math. Stat. \textbf{68}(2), 1621-1628 (2019).

\bibitem{b9}{Blaga, A. M., Perkta\c s, S. Y., Acet, B. E., Erdogan, F. E.}: \emph{$\eta$-Ricci solitons in $\varepsilon$-almost paracontact metric manifolds}. Glasnik Matematicki \textbf{53}(1), 377-410 (2018).



\bibitem{cat} {Catino, G.}: \emph{A note on four-dimensional (anti-)self-dual quasi-Einstein manifolds}, Differential Geom. Appl. \textbf{\textbf{30}}, 660-664 (2012).

\bibitem{cacr} {C\u alin, C., Crasmareanu, M.}: \emph{$\eta$-Ricci solitons on Hopf hypersurfaces in complex space forms}, Rev. Roumaine Math. Pures Appl. \textbf{57}(1), 55-63 (2012).

\bibitem{ch} {Cho, J.T., Kimura, M.}: \emph{Ricci solitons and real hypersurfaces in a complex space form}, Tohoku Math. J. \textbf{61}(2), 205-212 (2009).

\bibitem{mai} {Chaki, M.C., Maity, R.K.}: \emph{On quasi Einstein manifolds}, Publ. Math. Debrecen \textbf{57}, 297-306 (2000).

\bibitem{cho} {Chow, B., Chu, S.-C., Glickenstein, D., Guenther, C., Isenberg, J., Ivey, T., Knopf, D., Lu, P., Luo, F., Ni, L.}: \emph{The Ricci Flow: Techniques and Applications: Part I}: Geometric Aspects \textbf{135}, (2007).

\bibitem{chlu} {Chow, B., Lu, P., Ni, L.}: \emph{Hamilton's Ricci Flow}, Graduate Studies in Mathematics \textbf{77}, AMS, Providence, RI, USA, (2006).

\bibitem{b10}{De, K., Blaga, A. M., De, U. C.}: \emph{$*$-Ricci solitons on ($\varepsilon$)-Kenmotsu manifolds}. Palestine Math. J. \textbf{9}(2), 984-990 (2020).

\bibitem{degh} {De, U. C., Ghosh, G. C.}: \emph{On quasi-Einstein and special quasi-Einstein manifolds}, Proc.
of the Int. Conf. of Mathematics and its Applications, Kuwait University, April 5-7, 178-191 (2004).

\bibitem{degho} {De, U. C., Ghosh, G. C.}: \emph{On quasi-Einstein manifolds}, Period. Math. Hungar. \textbf{48}(1-2), 223-231 (2004).

\bibitem{deng} {Deng, H.}: \emph{Compact manifolds with positive $m$-Bakry-\'{E}mery Ricci tensor}, Differential Geom. Appl. \textbf{32}, 88-96 (2014).

\bibitem{de} {Deszcz, R., Hotlos, M., Senturk, Z.}: \emph{On curvature properties of quasi-Einstein hypersurfaces
in semi-Euclidean spaces}, Soochow J. Math. \textbf{27}, 375-389 (2001).

\bibitem{ham} {Hamilton, R. S.}: \emph{The Ricci flow on surfaces, Math. and general relativity} (Santa Cruz, CA, 1986), Contemp. Math. \textbf{71}, AMS, 237-262 (1988).

\bibitem{ham2} {Hamilton, R. S.}: \emph{Three-manifolds with positive Ricci curvature}, J. Differential Geom. \textbf{17}(2), 255-306 (1982).

\bibitem{l} {Lima, E. L.}: \emph{Grupo Fundamental e Espa\c{c}os de Recobrimento}, Projeto Euclides, 2012.

\bibitem{lot} {Lot, J.}: \emph{Some geometric properties of the Bakry-\'{E}mery Ricci tensor}, Comment. Math. Helv. \textbf{78}(4), 865-883 (2003).

\bibitem{pet2} {Petersen, P., Wylie, W.}: \emph{Rigidity of gradient Ricci solitons}, Pacific J. Math. \textbf{241}(2), 329-345 (2009).

\bibitem{qi} {Qian, Z.}: \emph{Estimates for weighted volumes and applications}, Quart. J. Math. Oxford Ser. (2) \textbf{48}(190), 235-242 (1997).

\bibitem{ms} {Smale, S.}: \emph{On gradient dynamical systems}, Ann. of Math. \textbf{2}(74), 199-206 (1961).

\bibitem{vi} {Vieira, M.}: \emph{Harmonic forms on manifolds with non-negative Bakry-\'{E}mery Ricci curvature}, arXiv:1309.7648v1.2013.

\end{thebibliography}
\end{document}